\newcommand{\HE}{Name of Handling Editor}
\newcommand{\DoS}{Month/Day/Year}
\newcommand{\DoA}{Month/Day/Year}
\newcommand{\CA}{Vu Anh Le}
\newcommand{\Names}{Hieu V. Ha, Vu A. Le, Tu T. C. Nguyen and Hoa D. Quang}
\newcommand{\Title}{Classification of Solvable Lie algebras whose non-trivial Coadjoint Orbits of simply connected Lie groups are all of Codimension 2}
\newtheorem{remark}[theorem]{Remark}
\newcommand{\R}{\mathbb{R}}
\newcommand{\C}{\mathbb{C}}
\newcommand{\Gc}{\mathcal G}
\newcommand{\Hc}{\mathcal H}
\DeclareMathOperator*{\rank}{rank}
\newcommand{\MD}[2]{\text{MD}_{#2}(#1)}
\newcommand{\ad}[1]{\textnormal{ad}_{#1}}
\newcommand{\adg}[1]{\textnormal{ad}^1_{#1}}
\newcommand{\adgg}[1]{\textnormal{ad}^2_{#1}}
\begin{document}

\bibliographystyle{plain}

%  Leave these commented lines here
%\input{ELAheader-template.tex}
% ELA insert correct page number
\setcounter{page}{1}

\thispagestyle{empty}

%Insert the title of the paper
 \title{\Title\thanks{Received
 by the editors on \DoS.
 Accepted for publication on \DoA. 
 Handling Editor: \HE. Corresponding Author: \CA}}

\author{
    Hieu Van Ha\thanks{University of Economics and Law, Ho Chi Minh City, Vietnam and Vietnam National University, Ho Chi Minh City, Vietnam (hieuhv@uel.edu.vn) } 
   \and
    Vu Anh Le\thanks{University of Economics and Law, Ho Chi Minh City, Vietnam and Vietnam National University, Ho Chi Minh City, Vietnam (vula@uel.edu.vn) }%\footnotemark[2]
    \and 
    Tu Thi Cam Nguyen\thanks{University of Science - Vietnam National University Ho Chi Minh City, Vietnam and  Can Tho University, Can Tho, Vietnam (camtu@ctu.edu.vn)}
    \and
    Hoa Duong Quang\thanks{Hoa Sen University, Vietnam (hoa.duongquang@hoasen.edu.vn).}
}

\markboth{\Names}{\Title}

\maketitle

\begin{abstract}
    We give a classification of real solvable Lie algebras whose non-trivial coadjoint orbits of corresponding simply connected Lie groups are all of codimension 2. These Lie algebras belong to a well-known class, called the class of MD-algebras.
\end{abstract}

\begin{keywords}
    Solvable, Lie groups, Classification of Lie algebras, MD-algebras.
\end{keywords}
\begin{AMS}
17B08,17B30. 
\end{AMS}

%%%%%%%%%%%%%%%%%%%%%%%%%%%%%%%%%%%%%%%%%%%%%%%%%%%%%%%%%%%%%

\section{Introduction}
    The problem of the classification of Lie algebras (as well as Lie groups) has received much attentions since the early 20$^\textnormal{th}$ century. However, this is still an open problem. By Levi's decomposition and the Cartan's theorem, we know that the problem of classification of Lie algebras over any field of characteristic zero are reduced to the problem of classification of solvable ones. However, until now, there is no a complete classification of $n$ dimensional solvable Lie algebras if $n\geq 7$. And this classification problem seems to be impossible to solve, unless there is a suitable change on the definition of term ``classification" or there is a completely new method to classify those Lie algebras \cite{Boza13}. 
    
	As we know, the Lie algebra of a (simply connected) Lie group is commutative if and only if all of its coadjoint orbits are trivial (or of dimension 0). However, Lie groups which has a non-trivial coadjoint orbit are much more complicated. In 1980, while searching for the class of Lie groups whose $C^*$-algebra can be characterized by BDF $K$-functions, Do Ngoc Diep proposed to study a class of Lie groups whose non-trivial coadjoint orbits have the same dimension \cite{Do99}. He named this class as MD-class. Any Lie group belongs to this class is called an MD-group and the Lie algebra of any MD-group is called an MD-algebra. 
	
	It can be said that Vuong Manh Son and Ho Huu Viet were the authors who faced the problem of classification MD-algebras (as well as MD-groups) firstly. In 1984, they gave not only the classification of MD-groups whose non-trivial coadjoint orbits are of the same dimension as the group but also some important characteristics of this class. For example, they showed that any non-commutative MD-algebra is either 1-step solvable or 2-step solvable, i.e. the second derived algebra is commutative \cite{Son-Viet}. Afterward, from 1990, Vu A. L. and Hieu V. H. (the authors of this paper) gave the classification (up to isomorphic) of some subclasses; including all MD-algebras of dimension 4  \cite{Le90-2}, all MD-algebras of dimension 5 \cite{Vu-Shum, LHT11}, all MD-algebras which have the first derived ideal of dimension 1 or codimension 1 \cite{Vu-MD}. 
	
	Besides, a list of all simply connected Lie groups whose coadjoint orbits are of dimension up to 2 was given by D. Arnal et al. in 1995 \cite{Arnal}. In 2019, Michel  Goze  and  Elisabeth  Remm used Cartan class to give the classification of all Lie algebras that all non-trivial coadjoint orbits of corresponding Lie groups are of dimension 4 \cite{GR19}. Remark that the Lie algebras classified in \cite{Arnal} and \cite{GR19} are all MD-algebras in terms of Diep. Moreover,  Goze  and  Remm also gave some characteristics of the class of MD-algebras whose non-trivial coadjoint orbits are of codimension 1. Recently, in an earlier article \cite{HHV21}, we have classified all real solvable Lie algebras whose non-trivial coadjoint orbits are of codimension 1. Now, we will give the complete classification of real solvable Lie algebras whose non-trivial coadjoint orbits are of codimension 2.  
	
	The paper is organized into 6 sections, including this introduction. In Section \ref{section-pre}, we will recall some basic preliminary concepts, notations and properties which will be used throughout the paper. In Section \ref{section-3} and Section \ref{section-4}, we will give the classification of 1-step solvable Lie algebras whose non-trivial coadjoint orbits are of codimension 2 [Theorem \ref{classification-commutative3}, Theorem \ref{theorem62}]. In Section \ref{section-5}, we will study the case of such 2-step solvable Lie algebras [Theorem \ref{main-noncommutative}], and complete the results in Sections \ref{section-3}, \ref{section-4}. Tables containing a list of results are provided in the last section.
    
% ---------- PRELIMINARIES --------------------------
% ---------------------------------------------------
\section{Preliminaries}\label{section-pre}
    We now introduce some key definitions, notations and terminologies.  For more details, we refer reader to \cite{Kiri}.
    
    \begin{itemize}
        \item Throughout this paper, the underlying field is always the field $\R$ of real numbers and $n$ is an integer $\geq 2$ unless otherwise stated.
       	\item For any Lie algebra $\Gc$ and  $0 < k \in \mathbb N$, the direct sum $\Gc \oplus \R ^k$ is called a trivial extension of $\Gc$.
        \item A Lie algebra $(\Gc,[\cdot,\cdot])$ is said to be $i$-step solvable or solvable of degree $i$ if its $i$-th derived algebra $\Gc^i:=[\Gc^{i-1},\Gc^{i-1}]$ is commutative and non-trivial (i.e. $\neq \{0\}$) where $\Gc^0:=\Gc$ and $0<i\in \mathbb N$.
    	\item An $n\times n$ matrix whose $(i,j)$-entry is $a_{ij}$ will be written as $(a_{ij})_{n\times n}$. While the $(i,j)$-entry of a matrix $A$ will be denoted by $(A)_{ij}$. The transpose of $A$ will be denoted by $A^t$. For an endomorphism $f$ on a vector space $V$ of dimension $n$, the matrix of $f$ with respect to a basis $\mathfrak{b}:=\{x_1,\dots,x_n\}$ of $V$ will be denoted by $[f]_{\mathfrak{b}}$. For short, if $U:=\langle x_k, \dots,x_n\rangle$ is the subspace of $V$ spanned by $\{x_k, \dots, x_n\}$ and if $g:U \rightarrow U$ is a linear endomorphism on $U$ then the notation $[g]_\mathfrak{b}$ will be used to denote the matrix of $g$ with respect to the basis $\{x_k, \dots,x_n\}$ of $U$.
    	\item As usual, the dual space of $V$ will be denoted by $V^*$. It is well-know that if $\{x_1,x_2,\dots,x_n\}$ is a basis of $V$ then $\{x_1^*,\dots,x_n^*\}$ is a basis of $V^*$, where each $x_i^*$ is defined by $x_i^*(x_j)=\delta_{ij}$ (the Kronecker delta symbol)  for $1 \leq i, j \leq n$.
    	\item For any $x \in \Gc$, we will denote by $\ad{x}$ the adjoint action of $x$ on $\Gc$, i.e. $\ad{x}$ is the endomorphism on $\Gc$ defined by $\ad{x}(y)= [x, y]$ for every $y \in \Gc$. By $\adg{x}$ and $\adgg{x}$, we mean the restricted maps of $\ad{x}$ on $\Gc^1$ and $\Gc^2$, respectively. Since $\Gc^1$ and $\Gc^2$ are ideals of $\Gc$, $\adg{x}$ and $\adgg{x}$ will be treated as endomorphisms on $\Gc^1$ and $\Gc^2$, respectively. 
    	\item In this paper, we will use the symbol $I$ to denote the $2\times 2$ identity matrix, and use $J$ to denote the following $2\times 2$ matrix $\begin{bmatrix}
    	    0 & 1 \\ -1 & 0
    	\end{bmatrix}.$ We shall denote by $\boldsymbol{0}$ the zero matrix of suitable size.
    \end{itemize}

    \begin{definition}{\rm
        Let $G$ be a Lie group and let $\Gc$ be its Lie algebra. If $\text{Ad}:G \rightarrow  \text{Aut}(\Gc)$ denotes the {\em adjoint representation} of $G$. Then the action
        \begin{align*}
            K: & \quad G \rightarrow \text{Aut}(\Gc^*) \\
               & \quad g \mapsto K_g
        \end{align*}
        defined by
        \begin{equation*}
            K_g(F)(x)=F(\text{Ad}(g^{-1})(x)) \text{ for } F\in\Gc^*, x\in\Gc.
        \end{equation*}
        is called the {\em coadjoint representation} of $G$ in $\Gc^*$. Each orbit of the coadjoint representation of $G$ is called a {\em coadjoint orbit}, or a {\em K-orbit} of $G$.}
    \end{definition}
    
    For each $F\in\Gc^*$, the coadjoint orbit for $F$ is denoted by $\Omega_F$, i.e. 
    \begin{equation*}
        \Omega_F=\{K_g(F): g\in G\}.
    \end{equation*}
    The dimension of each coadjoint orbit is determined via the following proposition.
    \begin{proposition}\cite{Kiri} \label{rankbf} Let $F$ be any element in $\Gc^*$. If $\{x_1,x_2,\dots,x_n\}$ is a basis of $\Gc$ then 
        \begin{equation*}
            \dim \Omega_F=\rank{\bigl(F([x_i,x_j])\bigr)_{n\times n}}.
        \end{equation*}
    \end{proposition}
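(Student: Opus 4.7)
The plan is to identify the coadjoint orbit $\Omega_F$ with a homogeneous space and then relate the dimension of the stabilizer to the kernel of a natural antisymmetric bilinear form on $\Gc$.

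First I would observe that, because $\Omega_F$ is the $G$-orbit through $F$ under the coadjoint action $K$, the transitive action of $G$ on $\Omega_F$ gives a diffeomorphism $\Omega_F \cong G/G_F$, where $G_F := \{g \in G : K_g(F) = F\}$ is the stabilizer of $F$. Hence
\begin{equation*}
    \dim \Omega_F = \dim G - \dim G_F.
\end{equation*}
Differentiating the defining equation of $K$ at the identity, the Lie algebra $\Gc_F$ of the closed subgroup $G_F$ consists exactly of those $x \in \Gc$ whose infinitesimal coadjoint action annihilates $F$, i.e.
\begin{equation*}
    \Gc_F = \{x \in \Gc : F([x,y]) = 0 \text{ for every } y \in \Gc\}.
\end{equation*}

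Next I would introduce the antisymmetric bilinear form $B_F : \Gc \times \Gc \to \R$ defined by $B_F(x,y) := F([x,y])$. By the previous step, $\Gc_F$ is precisely the radical (kernel) of $B_F$. Fixing the basis $\{x_1, \dots, x_n\}$ of $\Gc$, the matrix of $B_F$ is exactly $\bigl(F([x_i,x_j])\bigr)_{n \times n}$, so
\begin{equation*}
    \dim \Gc_F = n - \rank{\bigl(F([x_i,x_j])\bigr)_{n\times n}}.
\end{equation*}
Combining this with $\dim \Omega_F = n - \dim \Gc_F$ yields the claimed formula.

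The only genuine subtlety is justifying the identification of $\Gc_F$ with the Lie algebra of $G_F$ and the passage from $G_F$ being merely the set-theoretic stabilizer to being a closed (hence Lie) subgroup whose Lie algebra is what one computes by differentiation; this relies on standard facts about smooth actions of Lie groups on manifolds (the orbit-stabilizer theorem in the smooth category) and can be quoted from \cite{Kiri}. Everything else is linear algebra: the equality $\dim \ker B_F + \rank B_F = n$ applied to the antisymmetric form $B_F$. Since the statement is the classical Kirillov dimension formula, I would cite \cite{Kiri} for the underlying framework and present the above chain of equalities as the proof.
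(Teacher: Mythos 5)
Your proof is correct and is exactly the standard argument underlying the cited result: the paper itself gives no proof of this proposition, simply quoting it from \cite{Kiri}, and your chain (orbit--stabilizer diffeomorphism $\Omega_F\cong G/G_F$, identification of the stabilizer algebra $\Gc_F$ with the radical of the skew form $B_F(x,y)=F([x,y])$, then rank--nullity) is precisely how Kirillov establishes it. Nothing is missing beyond the standard smooth-action facts you already flag.
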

    \begin{remark}\label{remark23}
        The dimension of each K-orbit $\Omega_F$ is always even for every $F\in\Gc^*$. Moreover, $\dim\Omega_F>0$ if and only if $F|_{\Gc^1}\neq 0$.
    \end{remark}
    
    As mentioned in previous section, this paper concerns with Lie algebras whose non-trivial coadjoint orbits are all of the same dimension. 
    \begin{definition}\cite{Do99,Son-Viet} {\rm 
        An {\em MD-group} is a finite-dimensional, simply connected and solvable Lie group whose non-trivial coadjoint orbits are of the same dimension. The Lie algebra of an MD-group is called an {\em MD-algebra}. An MD-algebra $\Gc$ is called an MD$_k(n)$-algebra if $\dim\Gc=n$ and the same dimension of non-trivial coadjoint orbits is equal to $k$.}
    \end{definition}
    
     One of the most interesting characteristics on this class is about the degree of solvability which is proven by Son \& Viet \cite{Son-Viet}.
    \begin{proposition}\cite{Son-Viet}\label{solvability2}
        If $\Gc$ is an MD-algebra then the degree of solvability is at most 2, i.e. $\Gc^3=\{0\}$.
    \end{proposition}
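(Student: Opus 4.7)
The plan is to prove this by contradiction. Assume that $\Gc$ is an MD-algebra with $\Gc^3 \neq \{0\}$, and let $k$ denote the common dimension of its nontrivial coadjoint orbits. Since $\Gc$ is solvable, $\Gc^1 = [\Gc,\Gc]$ is nilpotent; together with $\Gc^3 \neq 0$ this forces the strict chain $\Gc \supsetneq \Gc^1 \supsetneq \Gc^2 \supsetneq \Gc^3 \supsetneq \{0\}$. I would then exhibit two functionals $F_0, F_1 \in \Gc^*$, each with nonzero restriction to $\Gc^1$ (so both index nontrivial orbits by Remark \ref{remark23}), whose bracket matrices from Proposition \ref{rankbf} have different ranks---contradicting the MD assumption.

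For $F_0$, I would pick $F_0 \in \Gc^*$ satisfying $F_0|_{\Gc^2} = 0$ and $F_0|_{\Gc^1} \neq 0$, which is possible because $\Gc^2 \subsetneq \Gc^1$. Any bracket $[x,y]$ with $x,y \in \Gc^1$ lies in $\Gc^2$ and is thus annihilated by $F_0$. In a basis $\{x_1,\dots,x_n\}$ of $\Gc$ whose last $\dim \Gc^1$ vectors span $\Gc^1$, the matrix $\bigl(F_0([x_i, x_j])\bigr)_{n\times n}$ therefore has a zero block of size $\dim \Gc^1 \times \dim \Gc^1$ in the bottom-right corner, so $\Gc^1$ is totally isotropic for the skew form $(x,y)\mapsto F_0([x,y])$. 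The standard dimension bound for isotropic subspaces of skew-symmetric forms then yields
\[
k = \dim \Omega_{F_0} \leq 2\bigl(\dim \Gc - \dim \Gc^1\bigr).
\]

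For $F_1$, I would choose $F_1 \in \Gc^*$ generically, in particular ensuring $F_1|_{\Gc^3} \neq 0$. Because $\Gc^3 = [\Gc^2,\Gc^2]$, the restriction of the skew form $(x,y)\mapsto F_1([x,y])$ to $\Gc^2\times \Gc^2$ is now nontrivial, so $\Gc^2$ is \emph{not} isotropic. Working in a basis adapted to the filtration and splitting $\Gc = U_0 \oplus U_1 \oplus U_2 \oplus \Gc^3$ with each $U_i$ a complement of $\Gc^{i+1}$ in $\Gc^i$, one writes the bracket matrix $M(F_1) := \bigl(F_1([x_i, x_j])\bigr)_{n \times n}$ in the corresponding block form and invokes a rank-additivity/Schur-complement argument: for generic $F_1$, contributions from each successive layer of the derived series should add independently to the rank, yielding
\[
\dim \Omega_{F_1} > 2\bigl(\dim \Gc - \dim \Gc^1\bigr),
\]
which contradicts $\dim \Omega_{F_1} = k$.

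The main obstacle is this last step. Knowing only that the $\Gc^2 \times \Gc^2$ block of $M(F_1)$ is nonzero merely yields $\rank M(F_1) \geq 2$, which need not exceed $2(\dim\Gc - \dim\Gc^1)$. Establishing the strict inequality rigorously requires the careful filtration-adapted block decomposition together with a precise genericity analysis showing that contributions from different layers do not cancel; this is the technical heart of the Son--Viet argument and, once in place, forces $\Gc^3 = \{0\}$ as claimed.
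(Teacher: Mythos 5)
The paper does not actually prove this proposition; it is quoted from Son--Viet \cite{Son-Viet}, so there is no internal argument to compare yours against. Judged on its own terms, your proposal has a genuine gap at exactly the point you flag. The first half is fine: choosing $F_0$ with $F_0|_{\Gc^2}=0$ but $F_0|_{\Gc^1}\neq 0$ makes $\Gc^1$ totally isotropic for the skew form $(x,y)\mapsto F_0([x,y])$, and the isotropic-subspace bound correctly gives $k=\dim\Omega_{F_0}\leq 2(\dim\Gc-\dim\Gc^1)$. But the second half, the strict inequality $\dim\Omega_{F_1}>2(\dim\Gc-\dim\Gc^1)$ for a generic $F_1$ with $F_1|_{\Gc^3}\neq 0$, is asserted rather than proved, and it is the entire content of the theorem. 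Non-isotropy of $\Gc^2$ only gives a nonzero $\Gc^2\times\Gc^2$ block, hence $\rank\geq 2$, and there is no general rank-additivity for block skew-symmetric matrices: the ranks of diagonal blocks in a filtration-adapted decomposition do not add up to the total rank, off-diagonal blocks can carry or cancel rank, and ``genericity'' by itself does not prevent the extra rank coming from $\Gc^2$ from being absorbed into the $2(\dim\Gc-\dim\Gc^1)$ already available from the $\Gc/\Gc^1$ rows. So the contradiction is never actually produced.

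A secondary structural concern: even granting that some inequality must fail, your strategy commits to violating the MD condition in one specific way (producing an orbit of dimension strictly greater than $2(\dim\Gc-\dim\Gc^1)$). A priori the failure could instead manifest as two nontrivial orbits both of dimension at most $2(\dim\Gc-\dim\Gc^1)$ but unequal --- for instance by comparing an $F$ vanishing on $\Gc^2$ with an $F$ vanishing on $\Gc^3$ but not on $\Gc^2$ --- so the target inequality is not forced by the hypothesis and would need independent justification. Until the ``technical heart'' you defer to Son--Viet is actually supplied, the argument does not establish $\Gc^3=\{0\}$.
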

    
    Therefore, the problem of classification of MD-algebras falls naturally into two parts:
    (1) the classification of 1-step solvable ones, and (2) the classification of 2-step solvable ones. However, if $\Gc$ is a 2-step solvable MD-algebra then $\Gc/\Gc^2$ is a 1-step solvable MD-algebra \cite[Theorem 3.5]{HHV21}. Hence, we should firstly study some interesting properties of 1-step solvable MD-algebras.
    
    \begin{proposition}\cite{HHV21}\label{pro211}
        Let $\Gc$ be a 1-step solvable Lie algebra of dimension $n$ such that its non-trivial coadjoint orbits are all of codimension $k$. If $\dim \Gc^1\geq n-k+1$ then
        %    \begin{enumerate}
        %        \item There is $x$ in $\Gc$ so that $\adg{x}$ is a linear automorphism on $\Gc^1$.
        %        \item 
                $\Gc$ is isomorphic to the semi-direct product $\mathcal{L}\oplus_{\rho} \Gc^1$ where $\mathcal{L}$ is a commutative sub-algebra of $\Gc$ and $\rho$ is defined by 
                \begin{equation}\label{def-rho}
                    \begin{array}{llll}
                        \rho: & \mathcal{L}\times\Gc^1 & \rightarrow & \Gc^1 \\
                         & (x,y) & \mapsto & [x,y].
                    \end{array}
                \end{equation}
       %     \end{enumerate}
    \end{proposition}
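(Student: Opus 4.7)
The goal is to produce a commutative subalgebra $\mathcal{L}$ of $\mathcal{G}$ complementary to $\mathcal{G}^1$; once this is done, the semi-direct decomposition $\mathcal{L}\oplus_\rho\mathcal{G}^1$ with $\rho(x,y)=[x,y]$ is automatic, since $\mathcal{G}^1$ is an ideal containing every bracket and is itself abelian by the 1-step solvable hypothesis. I would begin by fixing a basis $\{f_1,\ldots,f_s\}$ of $\mathcal{G}^1$ (so $s\geq n-k+1$) and completing it to a basis $\{e_1,\ldots,e_m,f_1,\ldots,f_s\}$ of $\mathcal{G}$, where $m=n-s\leq k-1$. Writing $[e_i,e_j]=\sum_\ell c^\ell_{ij}f_\ell$ and $[e_i,f_j]=\sum_\ell b^\ell_{ij}f_\ell$, I would try to replace each $e_i$ by $e'_i:=e_i-v_i$ for some $v_i\in\mathcal{G}^1$ to be determined. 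Since $\mathcal{G}^1$ is abelian, a direct expansion gives $[e'_i,e'_j]=[e_i,e_j]+\adg{e_j}(v_i)-\adg{e_i}(v_j)$, so commutativity of $\mathcal{L}:=\langle e'_1,\ldots,e'_m\rangle$ reduces to solving in $\mathcal{G}^1$ the linear system
\[ \adg{e_i}(v_j)-\adg{e_j}(v_i)=[e_i,e_j] \qquad (1\le i<j\le m). \]

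To invoke the coadjoint-orbit hypothesis, I would assemble the matrix of Proposition \ref{rankbf} in the chosen basis:
\[ B_F=\begin{pmatrix} A_F & C_F \\ -C_F^t & \boldsymbol{0}_{s\times s} \end{pmatrix}, \quad (A_F)_{ij}=F([e_i,e_j]),\ (C_F)_{ij}=F([e_i,f_j]), \]
where $A_F$ is skew-symmetric of size $m$, $C_F$ is $m\times s$, and both blocks depend linearly on $\xi:=F|_{\mathcal{G}^1}$ alone. By Remark \ref{remark23} and the hypothesis, $\rank B_F=n-k$ for every $\xi\neq 0$. A short block row/column reduction exploiting the zero corner then yields the identity
\[ \rank B_F=2\rank C_F+\rank\widetilde{A}_F, \]
where $\widetilde{A}_F$ is the skew-symmetric form induced by $A_F$ on $\ker C_F^t\subseteq\R^m$.

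The crux of the argument, and the main obstacle, is to translate this constant-rank identity into solvability of the linear system displayed above. I expect the argument to run as follows. On the Zariski-open locus of $\xi\in(\mathcal{G}^1)^*$ where $\rank C_F$ takes its generic value $r_{\max}$, the residue $\rank\widetilde{A}_F=n-k-2r_{\max}$ is constant and even; together with the linearity of both $A_F$ and $C_F$ in $\xi$, this forces the bracket obstruction $\bigl(c^\ell_{ij}\bigr)_{i<j,\,\ell}$ to lie in the image, under the coboundary map $(v_1,\ldots,v_m)\mapsto(\adg{e_i}v_j-\adg{e_j}v_i)_{i<j}$, of a suitable tuple $(v_1,\ldots,v_m)\in(\mathcal{G}^1)^m$. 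The inequality $m\leq k-1$, equivalent to $\dim\mathcal{G}^1\geq n-k+1$, is used decisively here: it limits the size of the cocycle relative to the ambient space of available $v_i$'s and guarantees that the coboundary operator is surjective onto the relevant subspace where the obstruction lives. Once the $v_i$'s are produced, the span $\mathcal{L}=\langle e'_1,\ldots,e'_m\rangle$ is the sought commutative complement, and the proposition follows.
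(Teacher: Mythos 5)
The paper gives no proof of this proposition --- it is imported wholesale from \cite{HHV21} --- so your proposal can only be judged on its own terms, and on those terms it contains a genuine gap precisely where the content of the statement lies. The first half is correct and standard: because $\Gc^1$ is abelian, replacing $e_i$ by $e_i'=e_i-v_i$ reduces the existence of a commutative complement to solving $\adg{e_i}(v_j)-\adg{e_j}(v_i)=[e_i,e_j]$, i.e.\ to showing that the $2$-cocycle of the abelian extension $0\to\Gc^1\to\Gc\to\Gc/\Gc^1\to 0$ is a coboundary, and your block-rank identity $\rank B_F=2\rank C_F+\rank\widetilde A_F$ is also correct. But the step that is the actual theorem --- deducing solvability of that system from the constant-orbit-dimension hypothesis --- is only announced (``I expect the argument to run as follows''), and the mechanism you sketch does not work. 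Surjectivity of the coboundary map $(v_1,\dots,v_m)\mapsto(\adg{e_i}v_j-\adg{e_j}v_i)_{i<j}$ is never a matter of comparing $m$ with $k$: already for $m=2$ its image is $\ima\adg{e_1}+\ima\adg{e_2}$, a proper subspace of $\Gc^1$ in general, and no dimension count places $[e_1,e_2]$ inside it. Moreover, working on the Zariski-open locus where $\rank C_F$ is maximal discards exactly the functionals at which the hypothesis has force.

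The hypothesis must be exploited at the \emph{degenerate} functionals. If $0\neq\xi\in(\Gc^1)^*$ vanishes on $[\Gc,\Gc^1]$, then $C_\xi=\boldsymbol{0}$ and $\rank B_\xi=\rank A_\xi\le m\le k-1$, which is strictly smaller than the common dimension of the non-trivial orbits; since that rank can only be $0$ or the common value, $A_\xi=\boldsymbol{0}$. (This step also shows that the statement must be read with $k$ the common \emph{dimension} of the non-trivial orbits --- which is how Lemma~\ref{cor2} uses it --- rather than the codimension as literally written and as you assumed; under the literal reading the needed inequality $m\le k-1<\dim\Omega_F$ is unavailable.) It follows that every $[e_i,e_j]$ lies in $[\Gc,\Gc^1]$, hence $\Gc^1=[\Gc,\Gc^1]=\sum_i\ima\adg{e_i}$; by Engel applied to the commuting family $\{\adg{e_i}\}$ this forces their common generalized $0$-weight space on $\Gc^1$ to vanish, and since every weight is then nonzero the Cartan-homotopy argument gives $H^2(\Gc/\Gc^1,\Gc^1)=0$, so the cocycle is a coboundary. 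Some argument of this kind --- using the rank condition at special $\xi$ together with the weight decomposition of the commuting operators $\adg{e_i}$, rather than a generic-rank identity plus a dimension count --- is what your proposal is missing.
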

    
    % Furthermore, for every $x,y\in \Gc$, the Jacobi identity gives us
    % \begin{equation}\label{eq-commutative}
    %     \ad{x}\ad{y}-\ad{y}\ad{x}=\ad{[x,y]}.
    % \end{equation}
    % Since $\text{trace}(AB-BA)=0$ for every two square matrices $(A, B)$ of the same size, we obtain 
    % \[
    %     \text{trace}(\ad{x})=\text{trace}(\adg{x})=\text{trace}(\adgg{x})=0 \quad \forall x\in\Gc^1.
    % \] 
    
    Moreover, if $\Gc$ is 1-step solvable then $[[x,y],z]=0$ for every $x,y\in\Gc, z\in\Gc^1$. It follows immediately from the Jacobi identity that   $\adg{x}\adg{y}=\adg{y}\adg{x}$ for every $x,y\in\Gc$. 
    \begin{lemma}\label{1-stepcomtu}
        If $\Gc$ is 1-step solvable then $\{\adg{x}:x\in\Gc\}$ is a family of commuting endomorphisms. 
    \end{lemma}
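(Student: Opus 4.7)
The plan is to derive the conclusion directly from the Jacobi identity together with the hypothesis of 1-step solvability, which says that $\Gc^2=[\Gc^1,\Gc^1]=\{0\}$. In fact, the paragraph preceding the lemma already does most of the work; my proof would simply make the implication explicit.

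First, I would fix arbitrary elements $x,y\in\Gc$ and an arbitrary $z\in\Gc^1$. Since $\Gc^1$ is an ideal of $\Gc$, the element $[y,z]$ again lies in $\Gc^1$, so $\adg{x}\adg{y}(z)=[x,[y,z]]$ makes sense as an element of $\Gc^1$. The Jacobi identity then reads
\begin{equation*}
    [[x,y],z]+[[y,z],x]+[[z,x],y]=0.
\end{equation*}
Because $[x,y]\in\Gc^1$ and $z\in\Gc^1$, the first summand is an element of $[\Gc^1,\Gc^1]=\Gc^2=\{0\}$; this is where 1-step solvability enters.

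Next, I would apply antisymmetry to the remaining two terms to rewrite the identity as $[x,[y,z]]=[y,[x,z]]$, i.e.\ $\adg{x}\adg{y}(z)=\adg{y}\adg{x}(z)$. Since $z\in\Gc^1$ was arbitrary, this proves $\adg{x}\adg{y}=\adg{y}\adg{x}$ as endomorphisms of $\Gc^1$, and since $x,y\in\Gc$ were arbitrary, the whole family $\{\adg{x}:x\in\Gc\}$ consists of pairwise commuting endomorphisms.

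There is no real obstacle here: the statement is a direct unpacking of Jacobi under the hypothesis $[\Gc^1,\Gc^1]=0$, and the only thing to check carefully is that the restriction $\adg{x}:\Gc^1\to\Gc^1$ is well defined, which follows from $\Gc^1$ being an ideal.
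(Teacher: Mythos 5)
Your proof is correct and follows essentially the same route as the paper: the paper likewise observes that $[[x,y],z]\in[\Gc^1,\Gc^1]=\{0\}$ for $z\in\Gc^1$ and deduces $\adg{x}\adg{y}=\adg{y}\adg{x}$ directly from the Jacobi identity. Your version merely makes explicit the antisymmetry step and the well-definedness of $\adg{x}$ on the ideal $\Gc^1$, which the paper leaves implicit.
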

    % Similarly, if $\Gc$ is 2-step solvable then $\adgg{x}\adgg{y}=\adgg{y}\adgg{x}$ for every $x,y\in\Gc$. This follows immediately from Proposition \ref{solvability2} that $\adgg{x}\adgg{y}=\adgg{y}\adgg{x}$ for every $x,y$ in an MD-algebra. In summary, we have proven the following proposition. 
    % % 
    % \begin{proposition}\label{commutative} 
    %     Let $\Gc$ be a finite dimensional Lie algebra. Then
    %     \begin{enumerate}
    %         \item $\textnormal{trace}(\ad{x})=\textnormal{trace}(\adg{x})=\textnormal{trace}(\adgg{x}) =0 $ for all $x\in\Gc^1$.
    %         \item $\{\adg{x}:x\in\Gc\}$ is a commuting family of endomorphisms on $\Gc^1$ provided $\Gc$ is 1-step solvable (i.e. $\Gc^1$ is commutative).
    %         \item $\{\adgg{x}:x\in\Gc\}$ is a commuting family of endomorphisms on $\Gc^2$ provided $\Gc$ is an MD-algebra.
    %     \end{enumerate}
    % \end{proposition}
    
    It is well-known that an arbitrary set of commuting  matrices over an algebraic closed field may be simultaneously brought to triangular form by a unitary similarity \cite{Morris,Heydar}. A similar version for the case of the real field is given in the following proposition.
    \begin{proposition}\label{triangular}
        Let $\mathcal{S}$ be a set of commuting real matrices of the same size. Then $\mathcal{S}$ is block simultaneously triangularizable in which the maximal size of each block is 2. In other words, there is a non-singular real matrix $T$ so that
        \begin{equation*}
            T\mathcal{S}T^{-1} =\left[\begin{array}{llllllll}
                *_{2\times 2} & \\
                  & \ddots & & & \textnormal{\huge *}\\
                  &        & *_{2\times 2} \\
                  &        &    & * \\
                  &\textnormal{\huge 0}&  &  &  \ddots \\
                  & & & & &  *
            \end{array}\right]
        \end{equation*}
        where each block $*_{2\times 2}$ is of the form $\begin{bmatrix}
            a & b\\
            -b & a
        \end{bmatrix}$ for some $a,b\in\R$ ($b$ is not necessary to be non-zero). 
    \end{proposition}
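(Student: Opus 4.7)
The plan is to proceed by induction on the common size $n$ of the matrices in $\mathcal{S}$, building up the block-triangular form one diagonal block at a time by extracting a common real invariant subspace of dimension $1$ or $2$ at each step. The base cases $n=1,2$ are immediate. For the inductive step, the key move is to complexify.

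First I would consider the family $\mathcal{S}_{\mathbb{C}}:=\{A_{\mathbb{C}}:A\in\mathcal{S}\}$ acting on $\mathbb{C}^n$; these still commute. By the classical result over an algebraically closed field (the same statement cited from \cite{Morris,Heydar}), a commuting family of endomorphisms on a finite-dimensional complex vector space admits a common eigenvector $v\in\mathbb{C}^n\setminus\{0\}$, with eigenvalues $\lambda_A=a_A+ib_A\in\mathbb{C}$ for each $A\in\mathcal{S}$. Writing $v=u+iw$ with $u,w\in\mathbb{R}^n$, separate the real and imaginary parts of $Av=\lambda_Av$ to obtain
\begin{equation*}
    Au=a_Au-b_Aw,\qquad Aw=b_Au+a_Aw,\quad\text{for every }A\in\mathcal{S}.
\end{equation*}

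Next I would choose the common real invariant subspace $W$ according to whether all $\lambda_A$ are real. If $b_A=0$ for every $A\in\mathcal{S}$, then both $u$ and $w$ are real common eigenvectors (at least one of them is nonzero), and I set $W:=\mathbb{R}x_0$ for such a nonzero real eigenvector; each $A\in\mathcal{S}$ acts on $W$ as the scalar $a_A$, giving a $1\times 1$ diagonal block. Otherwise there exists some $A_0\in\mathcal{S}$ with $b_{A_0}\neq 0$; then $u,w$ must be $\mathbb{R}$-linearly independent (if $w=cu$ then $v=(1+ic)u$ forces every $\lambda_A$ real, contradicting $b_{A_0}\neq 0$), and I set $W:=\mathbb{R}u\oplus\mathbb{R}w$. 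The displayed identities show $W$ is $A$-invariant for every $A\in\mathcal{S}$, and the matrix of $A|_W$ with respect to the ordered basis $(u,w)$ is exactly the required block $\begin{bmatrix}a_A & b_A\\ -b_A & a_A\end{bmatrix}$.

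Finally I would reduce to lower dimension. Extending a basis of $W$ to a basis of $\mathbb{R}^n$ yields an invertible real matrix $P$ such that every $PAP^{-1}$ has block form $\begin{bmatrix}B_A & *\\ \boldsymbol{0} & A'\end{bmatrix}$, where $B_A$ is the scalar $a_A$ or the $2\times 2$ block above, and $A'$ represents the induced action on $\mathbb{R}^n/W$. Since taking quotients preserves commutativity, $\{A':A\in\mathcal{S}\}$ is again a commuting family of real matrices, of size strictly smaller than $n$. By the inductive hypothesis there is an invertible real $T'$ simultaneously block-triangularizing this family in the prescribed form, and then $T:=\mathrm{diag}(I_{\dim W},T')\cdot P$ does the job for $\mathcal{S}$.

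The genuinely nontrivial input is the existence of a simultaneous complex eigenvector, which is cited; the remaining work is essentially bookkeeping. The only subtle point I would want to check carefully is the $\mathbb{R}$-linear independence of $u$ and $w$ in the complex-eigenvalue case (handled above) and the fact that the induced quotient matrices are genuinely commuting real endomorphisms of a well-defined real vector space $\mathbb{R}^n/W$; both are routine but necessary to make the induction rigorous.
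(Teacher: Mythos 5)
Your argument is correct and essentially complete; note that the paper itself gives no proof of Proposition \ref{triangular} at all --- it is simply asserted as the real analogue of the simultaneous triangularization theorem over an algebraically closed field cited from \cite{Morris,Heydar}. What you have written is precisely the standard way to make that assertion rigorous: complexify, invoke the existence of a common eigenvector for a commuting family, split $v=u+iw$ to obtain a common real invariant subspace $W$ of dimension $1$ or $2$, verify that $[A|_W]$ in the ordered basis $(u,w)$ is $a_AI+b_AJ^t=\begin{bmatrix} a_A & b_A\\ -b_A & a_A\end{bmatrix}$, pass to the commuting induced family on $\R^n/W$, and induct. The linear-independence check for $u,w$ in the non-real case and the assembly of $T$ are both handled correctly, and the argument works for an arbitrary (not necessarily finite) commuting set $\mathcal{S}$ since the common-eigenvector lemma does.

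The one thing your induction does not deliver is the specific ordering shown in the displayed matrix, namely all $2\times 2$ blocks in the upper-left followed by all $1\times 1$ blocks: your construction places at the top whichever type of block the first common eigenvector happens to produce, so in general the two types come out interleaved. This is not entirely cosmetic, since the paper later relies on being able to put a size-$1$ block in the bottom-right corner (the $m=3$ step in the proof of Theorem \ref{classification-commutative3}). The fix is minor: at each stage, as long as the current quotient family still admits a non-real joint character $\chi$, restrict to the generalized joint eigenspace of $\chi$ to produce a common eigenvector with non-real eigenvalue and peel off a $2\times 2$ block; only once every joint character is real do you switch to real common eigenvectors, after which all remaining blocks are $1\times 1$. (Equivalently, first decompose $\R^n$ into the real forms of the generalized joint eigenspaces and order the summands so that those coming from conjugate pairs of non-real characters appear first.) With that refinement your proof yields exactly the stated form.
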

    
    The following lemma is a straightforward but useful consequence of Propositions \ref{pro211}, \ref{triangular} and Lemma \ref{1-stepcomtu}.
    \begin{lemma}\label{cor2}
        Let $\Gc$ be a 1-step solvable $\MD{n}{n-2}$-algebra such that $m:=\dim\Gc^1$ is strictly greater than 2. Then there is a basis $\mathfrak{b}:=\{x_1,\ldots,x_n\}$ of $\Gc$ so that
            \begin{itemize}
                \item $\Gc^1=\langle x_{n-m+1},\ldots, x_n\rangle$ is commutative,
                \item $[x_i,x_j]=0$ for every $1\leq i,j\leq n-m$,
                \item The matrices $[\adg{x_1}]_\mathfrak{b}, [\adg{x_2}]_\mathfrak{b},\ldots, [\adg{x_{n-m}}]_\mathfrak{b}$ are of the block triangular form in the sense of Proposition \ref{triangular}.
    %            \item $\adg{x_1}$ is a linear automorphism on $\Gc^1$.
            \end{itemize}
    \end{lemma}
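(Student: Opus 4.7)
The plan is to construct the basis one bullet at a time, using exactly the three results cited just before the lemma.

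The first bullet is immediate: since $\Gc$ is $1$-step solvable, $\Gc^1$ is a commutative ideal, so any basis $\{x_{n-m+1},\ldots,x_n\}$ of $\Gc^1$ satisfies it. For the second bullet, I would invoke Proposition \ref{pro211} (with codimension $k=2$) to express $\Gc$ as a semi-direct product $\mathcal{L}\oplus_\rho\Gc^1$ in which $\mathcal{L}$ is a commutative subalgebra of dimension $n-m$. Taking $\{x_1,\ldots,x_{n-m}\}$ to be any basis of $\mathcal{L}$ then yields $[x_i,x_j]=0$ for all $1\le i,j\le n-m$, which is the second bullet.

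For the third bullet, Lemma \ref{1-stepcomtu} says that $\{\adg{x_1},\ldots,\adg{x_{n-m}}\}$ is a commuting family of endomorphisms of $\Gc^1$. Proposition \ref{triangular} then produces an invertible $m\times m$ real matrix $T$ that simultaneously block-triangularises the matrices of these endomorphisms. Replacing the basis of $\Gc^1$ by the one determined by $T$ brings each $[\adg{x_i}]_\mathfrak{b}$ into the block triangular form of Proposition \ref{triangular}; and because a change of basis inside $\Gc^1$ does not affect the brackets $[x_i,x_j]$ for $i,j\le n-m$, the first two bullets are preserved under this replacement.

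The only point that deserves real attention is the invocation of Proposition \ref{pro211}: as stated, its hypothesis reads $\dim\Gc^1\ge n-k+1$, which for $k=2$ becomes $m\ge n-1$ - a priori stronger than the standing assumption $m>2$. I would close this gap by combining the $\MD{n}{n-2}$ property with $m>2$ (for instance, by examining $\rank\bigl(F([x_i,x_j])\bigr)$ via Proposition \ref{rankbf} and using the skew-symmetry plus the fact that $\Gc^1$ is commutative to bound the possible codimension), thereby verifying that Proposition \ref{pro211} is in force. Once this routine check is made, the remainder of the argument is a direct three-stage assembly of Proposition \ref{pro211}, Lemma \ref{1-stepcomtu}, and Proposition \ref{triangular}, which is why the authors describe the lemma as a straightforward consequence.
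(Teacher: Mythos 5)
Your assembly of the three cited results is exactly the route the paper intends: the paper offers no separate argument for this lemma, declaring it an immediate consequence of Proposition \ref{pro211} (commutative complement $\mathcal{L}$), Lemma \ref{1-stepcomtu} (the $\adg{x_i}$ commute) and Proposition \ref{triangular} (simultaneous block triangularisation), and your three-stage construction, including the observation that a change of basis inside $\Gc^1$ preserves the first two bullets, is the correct way to combine them.

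The one point where you go astray is your plan for the hypothesis of Proposition \ref{pro211}. You propose to \emph{verify} that $\dim\Gc^1\ge n-k+1$ (i.e.\ $m\ge n-1$ when $k=2$) by a rank computation; this cannot succeed, because the inequality is simply false for the algebras the lemma is used to classify (e.g.\ $\mathfrak{s}_{6,226}$ has $n=6$ and $\dim\Gc^1=4<5$). Indeed, since $\Gc^1$ is a commutative ideal it is isotropic for every form $F([\cdot,\cdot])$, which forces $n-2=\dim\Omega_F\le 2(n-m)$, i.e.\ $m\le (n+2)/2$ --- an upper bound on $m$, the opposite of what you would need. The only consistent reading is that the inequality in Proposition \ref{pro211} is misprinted and is meant to be a lower bound of the form $\dim\Gc^1\ge k+1$, which for $k=2$ is precisely the standing hypothesis $m>2$; the proposition then applies verbatim and no additional verification is needed or possible. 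With that correction in place of your proposed ``routine check,'' the rest of your argument is sound.
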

    \begin{remark}\label{remark210}
        In the above lemma, we can choose $\mathfrak{b}$ so that the space $\mathcal{L}$ in the semi-direct sum $\mathcal{L}\oplus_\rho\Gc^1$ of $\Gc$ is spanned by $\{x_1, \dots, x_{n-m}\}$. If so, for each $F\in\Gc^*$, 
        \begin{equation*}
            \left(F\left([x_i,x_j]\right)\right)_{n\times n} = \begin{bmatrix}
                \boldsymbol 0 & P_F \\
                -P_F^t & \boldsymbol 0
            \end{bmatrix},
        \end{equation*}
        where $P_F$ is an $(n-m)\times m$ matrix which is defined by:
        \begin{equation*}
            (P_F)_{ij}:= F\left([x_i,x_{n-m+j}]\right).
        \end{equation*}
        By Proposition \ref{rankbf}, 
        \begin{equation*}
            \dim \Omega_F = 2 \rank{(P_F)} \quad \text{for every } F\in\Gc^*.
        \end{equation*}
    \end{remark}
    
%     By applying Proposition \ref{rankbf} and Remark \ref{remark210}, we can prove easily the following lemma.
%     \begin{lemma}\label{cor1}
%         Let $\Gc$ be a 1-step solvable Lie algebra of dimension $n$ which can be expressed as the semi-direct sum of $\Gc^1$ and a commutative sub-algebra $\mathcal{L}$ of $\Gc$, i.e. $\Gc=\mathcal{L}\oplus_{\rho}\Gc^1$, where $\rho$ is defined in the equation (\ref{def-rho}). And let $F$ be any element in $(\Gc)^*$. Then 
%         \begin{equation}
%             \dim \Omega_F = 2 \rank{(P_F)}
%         \end{equation}
%         where $P_F$ is defined in the equation (\ref{Pf}).
%     \end{lemma}
% %    
    
Finally, if $\Gc$ is an $\MD{n}{n-2}$-algebra then $\Gc/\Gc^2$ is an $\MD{n-\dim\Gc^2}{n-2}$-algebra \cite[Theorem 3.5]{HHV21}. Hence, we should recall here the classifications of $\MD{n}{n-1}$-algebras and $\MD{n}{n}$-algebras which are solved by Hieu et. al. \cite{HHV21} and Son \& Viet \cite{Son-Viet}, respectively. 
    
    \begin{proposition}\cite{HHV21}\label{pro210}
        Let $\Gc$ be a real MD$_{n-1}(n)$-algebra with $n\geq 5$. Then $\Gc$ is isomorphic to one of the followings: 
            \begin{enumerate}
                \item A trivial extension of $\textnormal{aff}(\mathbb C)$, namely $\mathbb R \oplus \textnormal{aff}(\mathbb C)$, where $\textnormal{aff}(\mathbb C):=\langle x_1,x_2,y_1,y_2\rangle$ is the complex affine algebra defined by
                \begin{equation*}
                    [x_1,y_1]=y_1, [x_1,y_2]=y_2, [x_2,y_1]=-y_2,[x_2,y_2]=y_1. 
                \end{equation*}
                \item The real Heisenberg Lie algebra \[\mathfrak{h}_{2m+1}:=\langle x_i,y_i,z:i=1,\dots,m\rangle, \] 
                with $[x_i,y_i]=z$ for every $1\leq i\leq m$.
                    
                \item The Lie algebra \[\mathfrak{s}_{5,45}:=\langle x_1,x_2,y_1,y_2,z\rangle,\] with
                    \[
                        [x_1,y_1]=y_1, [x_1,y_2]=y_2, [x_1,z]=2z, [x_2,y_1]=y_2, [x_2,y_2]=-y_1, [y_1,y_2]=z.
                    \]
            \end{enumerate}
    \end{proposition}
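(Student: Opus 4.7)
The plan is to apply Proposition \ref{solvability2} to split the argument into two cases according to the degree of solvability. Since $\dim \Omega_F = n - 1$ must be even by Remark \ref{remark23}, $n$ is necessarily odd throughout.

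First, assume $\Gc$ is 1-step solvable and write $m := \dim \Gc^1$. In the subcase $m = 1$, set $\Gc^1 = \langle z \rangle$, so there exist a linear form $\lambda \in \Gc^*$ and a skew-symmetric bilinear form $\mu$ on $\Gc$ with $[x, z] = \lambda(x) z$ and $[x, y] = \mu(x, y) z$ for all $x, y \in \Gc$. Applying the Jacobi identity to triples $(x, y_1, y_2)$ shows that $\ker \lambda$ is $\mu$-isotropic, so in a basis adapted to $\ker \lambda$ the matrix of $\mu$ has rank at most $2$. Since Proposition \ref{rankbf} combined with the codimension-$1$ hypothesis requires $\rank(\mu) = n - 1 \geq 4$, we must have $\lambda \equiv 0$; then $z$ is central, $\mu$ descends to a non-degenerate skew form on $\Gc/\langle z \rangle$, and $\Gc$ is identified with the Heisenberg algebra $\mathfrak{h}_n$. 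If instead $m \geq 2$, Lemma \ref{1-stepcomtu} and Proposition \ref{triangular} yield a basis of $\Gc^1$ in which the family $\{\adg{x} : x \in \Gc\}$ is simultaneously block upper-triangular with blocks of size at most $2$. I would pick a complementary basis on $\Gc$, write the matrix $\bigl(F([x_i, x_j])\bigr)$ in the resulting block form, and analyze the rank-$(n-1)$ condition. This forces at least one $2 \times 2$ block to have the form $aI + bJ$ with $b \neq 0$, which isolates the $\textnormal{aff}(\mathbb{C})$ action on $\Gc^1$. A dimension count combined with the Jacobi identity then forces $n = 5$ and $\Gc \cong \R \oplus \textnormal{aff}(\mathbb{C})$.

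Next, assume $\Gc$ is 2-step solvable. By \cite[Theorem 3.5]{HHV21}, $\Gc/\Gc^2$ is an $\MD{n - \dim \Gc^2}{n-1}$-algebra; since the orbit dimension $n - 1$ cannot exceed the ambient dimension $n - \dim \Gc^2$, we have $\dim \Gc^2 = 1$, and $\Gc/\Gc^2$ has open non-trivial coadjoint orbits. Invoking Son and Viet's classification of $\textnormal{MD}_k(k)$-algebras \cite{Son-Viet} then forces $\Gc/\Gc^2 \cong \textnormal{aff}(\mathbb{C})$, hence $n = 5$. To finish, I would classify the one-dimensional central extensions $0 \to \Gc^2 \to \Gc \to \textnormal{aff}(\mathbb{C}) \to 0$ that remain 2-step solvable and preserve the codimension-$1$ orbit condition; the Jacobi identity and the rank constraint on generic $F$ single out exactly one isomorphism class, namely $\mathfrak{s}_{5,45}$.

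The principal obstacle is expected to be the 1-step subcase $m \geq 2$: converting the block-triangular commutative structure together with the single rank constraint into a complete Lie algebra isomorphism classification, and in particular ruling out spurious candidates in dimension $n > 5$. The 2-step extension analysis, while intricate, is largely mechanical once $\Gc/\Gc^2$ has been pinned down as $\textnormal{aff}(\mathbb{C})$.
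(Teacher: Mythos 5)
This proposition is imported from \cite{HHV21}; the paper contains no proof of it, so there is nothing internal to compare against and your attempt must be judged on its own merits. The skeleton is sound — split by solvability degree via Proposition \ref{solvability2}, handle $\dim\Gc^1=1$ by the isotropy argument (that subcase is essentially complete and correct), and reduce the $2$-step case through $\Gc/\Gc^2$ and Proposition \ref{sonviet}. However, the $2$-step case contains a concrete error: the extension $0 \to \Gc^2 \to \Gc \to \textnormal{aff}(\C) \to 0$ is \emph{not} central, so classifying central extensions is the wrong problem. Here $\Gc^2$ is a one-dimensional ideal on which $\Gc$ acts by a character that the Jacobi identity forces to be non-trivial: writing $[y_1,y_2]=z$ with $z$ spanning $\Gc^2$ (non-zero since $\Gc$ is $2$-step solvable), the identity applied to $(x_1,y_1,y_2)$ yields $[x_1,z]=2z$; if $z$ were central this would read $2z=0$. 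Thus a literal execution of your plan would find no admissible central extension and conclude that the $2$-step case is empty, missing $\mathfrak{s}_{5,45}$ altogether — note $[x_1,z]=2z$ in its bracket table, and compare the identical computation in the paper's proof of Theorem \ref{main-noncommutative}, Case 2, where $\lambda_5=2$ is forced.

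The second gap is the $1$-step subcase $\dim\Gc^1\geq 2$, which you flag yourself but which is where most of the classification actually lives: you must rule out every $n>5$ and show that $n=5$ forces $\R\oplus\textnormal{aff}(\C)$, and the outline gives no mechanism for either. Note also that Proposition \ref{pro211} is unavailable here (its hypothesis $\dim\Gc^1\geq n-k+1$ with $k=1$ cannot hold for a solvable algebra), so the block form of $\bigl(F([x_i,x_j])\bigr)$ as in Remark \ref{remark210} — which presupposes an abelian complement $\mathcal L$ to $\Gc^1$ — is not automatic and would need a separate justification before the rank analysis you describe can even begin.
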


    \begin{proposition}\cite{Son-Viet}\label{sonviet}
        Let $\mathcal{G}$ be a real $\MD{n}{n}$-algebra. Then $\mathcal{G}$ is isomorphic to one of the following forms:
        \begin{enumerate}
            \item The real affine algebra $\textnormal{aff}(\mathbb{R}):=\langle x,y\rangle$ with 
                $
                    [x,y]=y,
                $
            \item The complex affine algebra $\textnormal{aff}(\mathbb{C})$ defined in Proposition \ref{pro210}.
         \end{enumerate}
    \end{proposition}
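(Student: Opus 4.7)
The approach is to combine the parity constraint of Remark \ref{remark23} with the structural results already established in Section \ref{section-pre}. Since every coadjoint orbit has even dimension, the hypothesis $\dim\Omega_F=n$ for every $F$ with $F|_{\Gc^1}\neq 0$ forces $n$ to be even, and, via Proposition \ref{rankbf}, is equivalent to the alternating form $B_F(x,y):=F([x,y])$ being non-degenerate on $\Gc$ for every such $F$. An immediate consequence is that the center $Z(\Gc)$ is trivial, since any central element would sit in the radical of every $B_F$.

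The first task is to show that $\Gc$ must be $1$-step solvable. By Proposition \ref{solvability2} we already know $\Gc$ is at most $2$-step, so it suffices to rule out $\Gc^2\neq\{0\}$. Assuming $\Gc^2\neq\{0\}$, pick $F\in\Gc^*$ vanishing on $\Gc^2$ but non-zero on $\Gc^1$. Because $[\Gc^1,\Gc^1]\subseteq\Gc^2$, the ideal $\Gc^1$ is totally isotropic for the non-degenerate form $B_F$, which already forces $\dim\Gc^1\leq n/2$. A further analysis, exploiting that $\Gc^2$ is an abelian ideal of $\Gc$ and combining this with $Z(\Gc)=\{0\}$, then produces a vector in the radical of a suitable $B_F$, contradicting non-degeneracy.

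Once $\Gc$ is known to be $1$-step solvable, Lemma \ref{1-stepcomtu} says that $\{\adg{x}:x\in\Gc\}$ is a commuting family on $\Gc^1$, and Proposition \ref{triangular} puts them simultaneously in block-triangular form with diagonal blocks of size $1\times 1$ or $2\times 2$. Arranging a semi-direct decomposition $\Gc=\Lc\oplus_\rho\Gc^1$ with $\Lc$ abelian, the matrix of $B_F$ takes the block shape of Remark \ref{remark210}, so $\rank B_F = 2\rank P_F$. Non-degeneracy of $B_F$ for every admissible $F$ then forces $\dim\Lc=\dim\Gc^1=n/2$, and makes every non-zero $P_F$ an invertible $m\times m$ matrix, where $m:=n/2$.

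The main obstacle is the final classification of such pencils. The block-triangular form guarantees that whenever $m\geq 3$, there is a proper invariant subspace $U\subsetneq\Gc^1$ common to all the operators $\adg{x}$; then choosing $F\in\Gc^*$ supported on a hyperplane of $\Gc^1$ containing $U$ forces the corresponding $P_F$ to have a zero row, contradicting invertibility. Hence $m\in\{1,2\}$ and $n\in\{2,4\}$. A direct normalisation in each remaining case identifies $\Gc$ with $\textnormal{aff}(\R)$ (for $n=2$, where $\adg{}$ is scalar) or with $\textnormal{aff}(\C)$ (for $n=4$, where the commuting family is spanned by $I$ and $J$), yielding the classification.
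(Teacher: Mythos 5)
The paper does not prove this statement at all: it is imported verbatim from Son--Viet \cite{Son-Viet}, so there is no internal proof to compare against. Judged on its own terms, your argument follows a sensible route (non-degeneracy of $B_F(x,y)=F([x,y])$ for every $F$ with $F|_{\Gc^1}\neq 0$, reduction to the $1$-step case, then the commuting block-triangular family of Proposition \ref{triangular} and a rank count on $P_F$), and its skeleton can be completed. But two steps are not actually carried out as written. First, the exclusion of the $2$-step case is only asserted: ``a further analysis \dots produces a vector in the radical of a suitable $B_F$'' is the crux and is left blank, and the ingredients you name ($Z(\Gc)=\{0\}$, total isotropy of $\Gc^1$) are not the ones that do the work. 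The correct one-line argument is that $\Gc^2$ is an ideal, so $[\Gc,\Gc^2]\subseteq\Gc^2$; hence for any $F$ with $F|_{\Gc^2}=0$ but $F|_{\Gc^1}\neq 0$ (such $F$ exists since $\Gc^2=[\Gc^1,\Gc^1]\subsetneq\Gc^1$), every nonzero element of $\Gc^2$ lies in the radical of $B_F$, so $0<\dim\Omega_F<n$, a contradiction. You should state this explicitly rather than gesture at it.

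Second, you invoke a semi-direct decomposition $\Gc=\Lc\oplus_\rho\Gc^1$ with $\Lc$ abelian, but Proposition \ref{pro211} does not supply it here: its hypothesis $\dim\Gc^1\geq n-k+1$ is vacuous for codimension $k=0$, and a $1$-step solvable algebra need not admit an abelian complement to $\Gc^1$ a priori. Fortunately this is not needed for your rank argument: since $\Gc^1$ is abelian, the last $m$ columns of $\bigl(F([x_i,x_j])\bigr)$ already have rank $\rank(P_F)$ for any vector-space complement, which forces $m=n/2$ and $P_F$ invertible for all admissible $F$, and the common-invariant-subspace argument (note: it produces zero \emph{columns} of $P_F$, not rows, in the paper's convention) then gives $m\leq 2$ with the $m=2$ family irreducible, i.e.\ spanning $\R I\oplus\R J$. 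The abelian complement is then recovered a posteriori in the $n=4$ case by replacing $x_2$ with $x_2-[x_1,x_2]$ (which is possible because $\adg{x_1}=I$ is invertible on $\Gc^1$), after which $\Gc\cong\textnormal{aff}(\C)$; the $n=2$ case is $\textnormal{aff}(\R)$ after rescaling. With these two repairs your proof is complete.
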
   
    \begin{remark}\label{remark213}
  Note that the dimension of any coadjoint orbit is even [Remark \ref{remark23}], therefore if $\Gc$ is an $\MD{n}{n-2}$-algebra then $n$ must be even. The case $n=2$ is trivial. The case $n=4$ is solved completely in \cite{Le90-2}. Namely, up to an isomorphism, in the $\MD{4}{2}$-class there are 5 decomposable algebras and 8 indecomposable ones as follows: %an $\MD{4}{2}$-algebra is isomorphic to one of the following algebras:
    \begin{enumerate}
        \item[(1)] The decomposable case:
        \begin{itemize}
            \item[(i)] $\textnormal{aff}(\R)\oplus \R^2$.
            \item[(ii)] $\mathfrak{s}_{3}\oplus \R$ where $\mathfrak{s}_3 \in \{\mathfrak{n}_{3,1},\, \mathfrak{s}_{3,1},\, \mathfrak{s}_{3,2},\,
        \mathfrak{s}_{3,3}\}$, i.e. $\mathfrak{s}_{3}$ is a  non-commutative solvable Lie algebra of dimension 3 according to the notation of \cite{Snob}.
        \end{itemize}
         % where $\textnormal{aff}(\R)$ is the real affine Lie algebra of dimension 2.
        %\item[(2)] : %, there are 4 such non-isomorphic families: $\mathfrak{n}_{3,1}$, $\mathfrak{s}_{3,1}$, $\mathfrak{s}_{3,2}$, and $\mathfrak{s}_{3,3}$. 
        \item[(2)] The indecomposable case: %One of the following 8 non-isomorphic families of indecomposable Lie algebras of dimension 4:
        $\mathfrak{n}_{4,1}$, $\mathfrak{s}_{4,1}$, $\mathfrak{s}_{4,2}$, $\mathfrak{s}_{4,3}$, $\mathfrak{s}_{4,4}$, $\mathfrak{s}_{4,5}$, $\mathfrak{s}_{4,6}$, $\mathfrak{s}_{4,7}$ according to the notation of \cite{Snob}.
    \end{enumerate}
    \end{remark}
   
    Hence, to completely classify the $\MD{n}{n-2}$-class, we only have to consider the remaining case when $n\geq 6$.

\section[One step solvable MD{n-2}{n}-algebras]{One-step solvable $\boldsymbol{\MD{n}{n-2}}$-algebras} \label{section-3}
    
% Theorem 1 
    %This section presents main results of the paper. We firstly classify 1-step solvable $\MD{n}{n-2}$ Lie algebras whose first derived algebras are of dimension at least 3 [Theorem \ref{classification-commutative3}].
    %Afterward, we will classify $\MD{n}{n-2}$-algebras whose derived algebras are of dimension at most 2 [Theorem \ref{theorem62}]. 
    %Finally, we will use the classification of $\MD{n}{n-1}$-algebras in Proposition \ref{pro210} to obtain the classification of 2-step solvable $\MD{n}{n-2}$-algebras [Theorem \ref{main-noncommutative}].
    According to Proposition \ref{solvability2} and Lemma \ref{cor2}, the classification of $\MD{n}{n-2}$-algebras falls naturally into three problems:
    \begin{itemize}
        \item The problem of classification those 1-step solvable algebras which have the derived algebra of dimension at least 3.
        \item The problem of classification of those 1-step solvable algebras which have the derived algebra of dimension at most 2.
        \item The problem of classification of those 2-step solvable algebras.
    \end{itemize}
    We will solve the first item in this section. The remaining items will be solved in the next sections.

    \begin{theorem}\label{classification-commutative3}
        Let $\Gc$ be a 1-step solvable $\MD{n}{n-2}$-algebra of dimension $n\geq 6$ and $\dim\Gc^1\geq 3$. Then $n$ must be 6 and $\Gc$ is isomorphic to one of the following families: $\mathfrak{s}_{6,211}$, $\mathfrak{s}_{6,225}$, $\mathfrak{s}_{6,226}$, $\mathfrak{s}_{6,228}$\footnote{Some algebras contained in families listed in \cite{Snob} are not MD-algebras, we will give the detail Lie brackets of these Lie algebras (which are MD-algebras) in the final section} listed in \cite{Snob}.

    \end{theorem}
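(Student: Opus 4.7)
The plan is to combine the setup of Lemma \ref{cor2} with a direct rank argument to first force $n=6$, and then to enumerate the surviving families. Let $m := \dim \Gc^1 \geq 3$ and $\ell := n-m$. By Proposition \ref{pro211} and Lemma \ref{cor2}, I fix a basis $\mathfrak{b} = \{x_1,\dots,x_n\}$ such that $\Lc := \langle x_1,\dots,x_\ell\rangle$ is a commutative complement, $\Gc^1 = \langle x_{\ell+1},\dots,x_n\rangle$, and each matrix $A_i := [\adg{x_i}]_\mathfrak{b}$ on $\Gc^1$ (for $1 \leq i \leq \ell$) is in the block upper triangular form of Proposition \ref{triangular}, with diagonal blocks $D_{i,1},\dots,D_{i,k}$ each of size $1$ or $2$. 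Write $\Gc^1 = V_1 \oplus \cdots \oplus V_k$ for the corresponding flag. By Remark \ref{remark210}, the $i$-th row of $P_F$ is the row vector $(A_i^{t}f)^{t}$ where $f := F|_{\Gc^1}$, and the $\MD{n}{n-2}$ hypothesis forces $\rank P_F = (n-2)/2$ for every $F$ with $f \neq 0$.

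The crucial observation is that each $A_i^t$ is block \emph{lower} triangular with diagonal blocks $D_{i,j}^t$. I choose $F$ whose restriction $f$ is supported only on the last summand $V_k$. A short computation shows that $A_i^t f$ then remains supported in $V_k$, with $(A_i^t f)|_{V_k} = D_{i,k}^t\, f|_{V_k}$. Consequently
\begin{equation*}
    \rank P_F \;=\; \dim\,\mathrm{span}\{A_1^t f,\dots,A_\ell^t f\} \;\leq\; \dim V_k \;\leq\; 2.
\end{equation*}
Equating with $(n-2)/2$ yields $n \leq 6$, and together with $n \geq 6$ this forces $n = 6$.

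Once $n=6$ is pinned down, the bound $\rank P_F \leq \min(\ell,m)$ combined with $m \geq 3$ leaves only $(\ell,m) \in \{(3,3),(2,4)\}$. For each of these two cases I enumerate, up to isomorphism, all commuting $\ell$-tuples of block upper triangular $m \times m$ matrices with diagonal blocks of size at most $2$ for which $\rank P_F = 2$ holds for every $f \neq 0$. The available symmetries are a change of basis inside $\Lc$ (replacing the $\ell$-tuple $(A_1,\dots,A_\ell)$ by any invertible linear combination of its entries) and a simultaneous conjugation of the $A_i$ by some $T \in \mathrm{GL}(\Gc^1)$ preserving the block filtration. A case analysis by the pattern of diagonal block sizes, the eigenvalue structure, and whether the coupling blocks above the diagonal are zero reduces each family to finitely many canonical forms; reading off the corresponding Lie brackets and rescaling the basis, each canonical form is identified with exactly one of $\mathfrak{s}_{6,211}$, $\mathfrak{s}_{6,225}$, $\mathfrak{s}_{6,226}$, $\mathfrak{s}_{6,228}$.

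The hard part is this last enumeration. The rank condition $\rank P_F = 2$ must be verified for every non-zero $f$ (not merely generically), which translates into rather rigid algebraic constraints: all $3\times 3$ minors of $P_F$ must vanish identically in $f$, while the $2\times 2$ minors have no common non-trivial zero. Over $\R$ one also has to carefully distinguish the two types of $2\times 2$ diagonal blocks in Proposition \ref{triangular} (real split versus complex conjugate eigenvalues) and separately handle degenerate sub-cases in which off-diagonal coupling blocks are non-zero. Carrying out this case analysis, and then matching the resulting real parameters against the Snobl list in \cite{Snob}, is the principal computational task.
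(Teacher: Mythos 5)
Your first stage is sound and matches the paper's own argument: taking $F$ supported on the last diagonal block of the simultaneous block-triangularization gives $\rank P_F\leq 2$, hence $n-2\leq 4$ and $n=6$; and the bound $\rank P_F\leq\min(\ell,m)$ with $P_F$ of size $\ell\times m$ correctly narrows the possibilities to $(\ell,m)\in\{(3,3),(2,4)\}$. But from that point on the proposal stops proving and starts describing. The final paragraph — ``a case analysis by the pattern of diagonal block sizes, the eigenvalue structure, and whether the coupling blocks above the diagonal are zero reduces each family to finitely many canonical forms'' — is precisely the content of the theorem, and you acknowledge yourself that carrying it out is ``the principal computational task.'' As written, nothing in the proposal actually produces the four families or shows the list is exhaustive, so the statement is not proved.

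Two concrete pieces are missing. First, the case $(\ell,m)=(3,3)$ must be shown to be empty: since $m=3$ is odd, at least one diagonal block in the triangularization of Proposition \ref{triangular} has size $1$, and the functional dual to that coordinate yields a $P_F$ whose nonzero entries sit in a single column, so $\rank P_F\leq 1$, a contradiction; your proposal never rules this case out, yet all four target algebras have $\dim\Gc^1=4$. Second, and more seriously, for $(\ell,m)=(2,4)$ the decisive issue is whether the off-diagonal $2\times 2$ coupling blocks of $[\adg{x_1}]$ and $[\adg{x_2}]$ can be removed by a change of basis. The paper isolates this in Lemma \ref{lemma35}: the coupling can be killed unless the diagonal blocks are exactly $(I,\pm J)$ on top of $(I,J)$, and it is exactly this exceptional case that produces $\mathfrak{s}_{6,211}$ and $\mathfrak{s}_{6,225}$ (nontrivial coupling), while the generic case produces the continuous families $\mathfrak{s}_{6,226}$ and $\mathfrak{s}_{6,228}$ subject to the open condition $\lambda\zeta-\mu\eta\neq 0$ coming from $\rank P_F=2$ for all $F$ with $F|_{\Gc^1}\neq 0$. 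Without an analogue of that lemma and the subsequent normalization of parameters, the enumeration cannot be completed, and the claim that one ends with ``finitely many canonical forms'' is not even accurate as stated, since three of the four families depend on continuous moduli.
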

    
    \begin{remark}
        If $\Gc$ is a decomposable $\MD{6}{4}$-algebra then $\Gc$ is a trivial extension of either an %isomorphic to $\R^i\oplus \Gc'$ where either $\Gc'$ is an 
        indecomposable $\MD{5}{4}$-algebra %(and $i=1$) 
        or an indecomposable $\MD{4}{4}$-algebra %(and $i=2$)
        \cite[Theorem 3.1]{HHV21}. These indecomposable MD-algebras are classified in \cite{Son-Viet,LHT11,Vu-Shum}. Based on their classification, there are exactly one indecomposable $\MD{4}{4}$-algebra $\textnormal{aff}(\C)$ and exactly one indecomposable $\MD{5}{4}$-algebra %which are  and 
        $\mathfrak{s}_{5,45}$ in Proposition \ref{pro210} %respectively. 
        Hence, if $\Gc$ is a decomposable $\MD{6}{4}$-algebra then $\Gc$ is either isomorphic to $\R^2\oplus\textnormal{aff}(\C)$ or isomorphic to $\R\oplus\mathfrak{s}_{5,45}$.
    \end{remark}

    In order to prove Theorem \ref{classification-commutative3}, we will need the following lemma.
% Lemma 4.2. ---------------------------------------    
    \begin{lemma}\label{lemma35}
        Let $f, g$ be two commutative endomorphisms on $\R^4$, i.e. $f\circ g=g\circ f$. Assume that the matrices of $f$ and $g$ with respect to a basis $\mathfrak{b}$ are equal to
        \begin{equation*}
            [f]_{\mathfrak{b}}=\begin{bmatrix}
                A_1 & A_2 \\
                \boldsymbol{0} & I
            \end{bmatrix}, [g]_{\mathfrak{b}}=\begin{bmatrix}
                B_1 & B_2 \\
                \boldsymbol{0} & J
            \end{bmatrix};
        \end{equation*}
        where $A_1,A_2, B_1,B_2$ are $2\times 2$ matrices. If either $\det(B_1^2+I)\neq 0$ or $\det(A_1-I)\neq 0$ then there is a basis $\mathfrak{b}'$ of $\R^4$ so that 
        \begin{equation*}
            [f]_{\mathfrak{b}'}=\begin{bmatrix}
                A_1 & \boldsymbol{0} \\
                \boldsymbol{0} & I
            \end{bmatrix}, [g]_{\mathfrak{b}'}=\begin{bmatrix}
                B_1 & \boldsymbol{0} \\
                \boldsymbol{0} & J
            \end{bmatrix}.
        \end{equation*}
    \end{lemma}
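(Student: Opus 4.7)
The idea is to realize $\mathfrak{b}'$ via a change-of-basis matrix of the form
\[ T=\begin{bmatrix} I & X \\ \boldsymbol{0} & I \end{bmatrix} \]
for some $2\times 2$ real matrix $X$ to be determined. A direct block computation shows that conjugation by $T$ preserves the top-left and bottom blocks of both $[f]_\mathfrak{b}$ and $[g]_\mathfrak{b}$, but replaces the top-right blocks by
\[ (A_1-I)X+A_2 \quad\text{and}\quad B_1X-XJ+B_2, \]
respectively. Hence it suffices to find a single $X$ that simultaneously annihilates both of these expressions.

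I introduce the $\mathbb{R}$-linear operators
\[ L_f,\,L_g\colon M_2(\mathbb{R})\to M_2(\mathbb{R}),\qquad L_f(X):=(A_1-I)X,\qquad L_g(X):=B_1X-XJ, \]
so that the problem becomes the linear system $L_f(X)=-A_2$, $L_g(X)=-B_2$. The crucial structural input is the commutativity $f\circ g=g\circ f$: equating the top-left blocks of $[f]_\mathfrak{b}[g]_\mathfrak{b}$ and $[g]_\mathfrak{b}[f]_\mathfrak{b}$ gives $A_1B_1=B_1A_1$, while equating the top-right blocks yields $(A_1-I)B_2=B_1A_2-A_2J$, which can be rewritten as the compatibility condition
\[ L_f(B_2)=L_g(A_2). \]
Since $A_1-I$ and $B_1$ commute, one also checks directly that $L_f\circ L_g=L_g\circ L_f$ on $M_2(\mathbb{R})$.

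The argument now splits symmetrically according to which half of the hypothesis is in force. The operator $L_f$ is bijective exactly when $\det(A_1-I)\neq 0$. The operator $L_g$ is of Sylvester type and is bijective iff $B_1$ and $J$ share no complex eigenvalue; since $J$ has eigenvalues $\pm i$, this is equivalent to $\det(B_1-iI)\det(B_1+iI)=\det(B_1^2+I)\neq 0$. If $L_f$ is invertible, set $X:=-L_f^{-1}(A_2)$, so that $L_f(X)=-A_2$ holds by construction; then
\[ L_f(L_g(X))=L_g(L_f(X))=-L_g(A_2)=-L_f(B_2)=L_f(-B_2), \]
and injectivity of $L_f$ forces $L_g(X)=-B_2$. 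If instead $L_g$ is invertible, set $X:=-L_g^{-1}(B_2)$ and apply the same compatibility-plus-commutation argument with the roles of $L_f$ and $L_g$ swapped.

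The only substep I expect to be non-routine is the Sylvester fact identifying $\ker L_g$. I would verify it briefly by complexifying $M_2(\mathbb{R})$ and decomposing it into the $\pm i$-eigenspaces of the right-multiplication by $J$: on each such eigenspace $L_g$ acts as $B_1\mp iI$, so $\ker L_g=0$ iff $(B_1-iI)(B_1+iI)=B_1^2+I$ is invertible, which is exactly the stated determinant condition.
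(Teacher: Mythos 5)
Your proof is correct and follows essentially the same route as the paper: both arguments use a unipotent block-triangular change of basis, exploit the nonsingularity hypothesis to solve the resulting Sylvester-type linear system clearing one off-diagonal block (the paper's explicit $B_1^2+I$ system is exactly your eigenvalue criterion for the invertibility of $L_g$), and then invoke $f\circ g=g\circ f$ together with the same injectivity to force the other block to vanish. Your packaging via the operators $L_f,L_g$ and the compatibility identity $L_f(B_2)=L_g(A_2)$ is a cleaner formulation of the paper's entrywise computations, but the underlying mechanism is identical.
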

    
    \begin{proof}[Proof of Lemma \ref{lemma35}]
        Let's denote the vectors in the basis $\mathfrak{b}$ by $\{y_1,y_2,y_3,y_4\}$.
        \begin{itemize}
            \item If $\det(B_1^2+I)\neq 0$, then we first claim that there are $\alpha,\beta,\gamma,\delta \in\R$ so that
                \begin{equation*}
                    \begin{bmatrix}
                        -\gamma & \alpha \\
                        -\delta & \beta
                    \end{bmatrix} =
                    B_2+B_1\begin{bmatrix}
                        \alpha & \gamma \\
                        \beta & \delta
                    \end{bmatrix}.
                \end{equation*}
                Indeed, the above system is equivalent to 
                \begin{equation*}
                    \left\{
                        \begin{array}{rl}
                            \begin{bmatrix}
                            -\gamma \\ -\delta
                            \end{bmatrix} = & 
                            \begin{bmatrix}
                                (B_2)_{11} \\ (B_2)_{21}
                            \end{bmatrix} + B_1\begin{bmatrix}
                                \alpha \\ \beta
                            \end{bmatrix} \\
                            \begin{bmatrix}
                            \alpha \\ \beta
                            \end{bmatrix} = & 
                            \begin{bmatrix}
                                (B_2)_{12} \\ (B_2)_{22}
                            \end{bmatrix} + B_1\begin{bmatrix}
                                \gamma \\ \delta
                            \end{bmatrix}
                        \end{array}
                    \right.,
                \end{equation*}
            or
            \begin{equation*}\label{eq425}
                \left\{
                    \begin{array}{rl}
                        \begin{bmatrix}
                        -\gamma \\ -\delta
                        \end{bmatrix} = & 
                        \begin{bmatrix}
                            (B_2)_{11} \\ (B_2)_{21}
                        \end{bmatrix} + B_1\begin{bmatrix}
                            \alpha \\ \beta
                        \end{bmatrix} \\
                        (B_1^2+I)\begin{bmatrix}
                        \alpha \\ \beta
                        \end{bmatrix} = & 
                        \begin{bmatrix}
                            (B_2)_{12} \\ (B_2)_{22}
                        \end{bmatrix} - B_1\begin{bmatrix}
                            (B_2)_{11} \\ (B_2)_{21}
                        \end{bmatrix}
                    \end{array}
                \right..
            \end{equation*}
            The existence of $\alpha,\beta,\gamma,\delta$ follows from the non-singularity of $B_1^2+I.$ 
            
            Let $\mathfrak{b}':=\{y_1',y_2',y_3',y_4'\}$ be a basis of $\R^4$ defined by:
            \begin{equation*}
                \left\{
                    \begin{array}{ll}
                        y_1'=y_1, y_2'=y_2\\
                        y_3'=y_3+\alpha y_1+\beta y_2\\
                        y_4'=y_4+\gamma y_1+\delta y_2.
                    \end{array}
                \right.
            \end{equation*}
            Then the matrix of $f$ and $g$ with respect to $\mathfrak{b}'$ are determined as
            \begin{equation*}
                [f]_{\mathfrak{b}'}=\begin{bmatrix}
                    A_1 & A_2' \\
                    \boldsymbol{0} & I
                \end{bmatrix}, \quad [g]_{\mathfrak{b}'}=\begin{bmatrix}
                    B_1 & \boldsymbol{0} \\
                    \boldsymbol{0} & J
                \end{bmatrix},
            \end{equation*}
            for some $2\times 2$ matrix $A_2'$.
            Moreover,      
                \begin{equation*}
                    f\circ g=g\circ f \Longleftrightarrow A_2' \times  J=B_1\times A_2' 
                        \Longleftrightarrow 
                        \left\{
                            \begin{array}{rl}
                                    -\begin{bmatrix}
                                    (A'_2)_{12} \\ (A'_2)_{22}
                                \end{bmatrix}= B_1 \begin{bmatrix}
                                    (A'_2)_{11}  \\ (A'_2)_{21}
                                \end{bmatrix} \\
                                \begin{bmatrix}
                                    (A'_2)_{11} \\ (A'_2)_{21}
                                \end{bmatrix}= B_1 \begin{bmatrix}
                                    (A'_2)_{12}  \\ (A'_2)_{22}
                                \end{bmatrix}
                            \end{array}
                        \right..
                \end{equation*}
                Hence,
                \begin{equation*}
                    \left\{
                        \begin{array}{rl}
                                -\begin{bmatrix}
                                (A'_2)_{12} \\ (A'_2)_{22}
                            \end{bmatrix}= B_1 \begin{bmatrix}
                                (A'_2)_{11}  \\ (A'_2)_{21}
                            \end{bmatrix} \\
                            (B_1^2+I)\begin{bmatrix}
                                (A'_2)_{11} \\ (A'_2)_{21}
                            \end{bmatrix} = \begin{bmatrix}
                                0 \\ 0
                            \end{bmatrix},
                        \end{array}
                    \right.
                \end{equation*}
                which implies, from $\det(B_1^2+I)\neq 0$, that $A'_2=\boldsymbol{0}$.
            \item By the same manner as previous item, if $\det(A_1-I)\neq 0$ then there exist $\alpha,\beta,\gamma,\delta\in \R$ so that
                \begin{equation*}
                    (A_1-I)\begin{bmatrix}
                        \alpha & \gamma\\
                        \beta & \delta
                    \end{bmatrix} = -A_2.
                \end{equation*}
                Equivalently, the matrix of $f$ with respect to the basis $\mathfrak{b}':=\{y_1,y_2,y_3+\alpha y_1+\beta y_2, y_4+\gamma y_1+\delta y_2\}$ is equal to
                $
                    \begin{bmatrix}
                        A_1 & \boldsymbol{0}\\
                        \boldsymbol{0} & I
                    \end{bmatrix}.
                $
                Once again, the commutation of $f$ and $g$ implies that the matrix of $g$ with respect to $\mathfrak{b}'$ is equal to
                $
                    \begin{bmatrix}
                        B_1 & \boldsymbol{0}\\
                        \boldsymbol{0} & J
                    \end{bmatrix}.
                $
                This completes the proof of the Lemma.
        \end{itemize}
    \end{proof}
    
    Now, we begin to prove Theorem \ref{classification-commutative3}. The proof falls into three parts. Firstly, we will prove that $\dim \Gc = 6$, and $\dim \Gc^1 \leq 4$. Secondly, we will prove that there is no $\MD{6}{4}$-algebra with $\dim\Gc^1=3$. Thirdly, we will classify $\MD{6}{4}$-algebras with $\dim\Gc^1=4$.  
    
    \begin{proof}[Proof of Theorem \ref{classification-commutative3}]
        Let's denote by $m$ the dimension of $\Gc^1$ ($m\geq 3$) and let $\mathfrak{b}$ be a basis of $\Gc$ which satisfies all conditions in Lemma \ref{cor2}. If so, 
        \begin{equation*}%\label{eq41}
            P_{x_n^*}=\begin{bmatrix}
                x_n^*([x_1,x_{n-m+1}]) & x_n^*([x_1,x_{n-m+2}]) & \cdots & x_n^*([x_1,x_n]) \\
                x_n^*([x_2,x_{n-m+1}]) & x_n^*([x_2,x_{n-m+2}]) & \cdots & x_n^*([x_2,x_n]) \\
                \vdots & \vdots &  & \vdots \\
                x_n^*([x_{n-m},x_{n-m+1}]) & x_n^*([x_{n-m},x_{n-m+2}]) & \cdots & x_n^*([x_{n-m},x_n]) \\
            \end{bmatrix}.
        \end{equation*}
        
        Because the matrices $[\adg{x_1}]_\mathfrak{b}, \ldots,[\adg{x_{n-m}}]_\mathfrak{b}$ are of block triangular form in the sense of Proposition \ref{triangular}, the first $(m-2)$ columns of $P_{x_n^*}$ are equal to zero. Hence, 
        \begin{equation*}
            \rank{(P_{x_n^*})} \leq 2.
        \end{equation*}
        By Remark \ref{remark210}, we obtain $\dim\Omega_{x_n^*}\leq 4$. Since each non-trivial coadjoint orbit of $\Gc$ is of dimension $n-2$, we get $n-2\leq 4$, i.e. $n\leq 6$. By the assumption, $n\geq 6$. Therefore, $n$ must be 6. In particular, $m = \dim \Gc^1 < \dim \Gc = 6$. 
        
        Now, we will prove that $m \leq 4$. Assume the contrary that $m=5$ then all but the first row of $P_{x_n^*}$ is zero. This turns out that $\dim\Omega_{x_n^*} \leq 2$, a contradiction to the fact that every non-trivial coadjoint orbit of an $\MD{n}{n-2}$-algebra is of dimension $n-2$. Hence, $3\leq m\leq 4$. 
        
        However, if $m=3$ then there is at least one block of size 1 in the triangular form of the matrices $\left\{[\adg{x_i}]_\mathfrak{b}: i=1,2,3\right\}$. In the other words, we may assume that 
        \begin{equation*}
            [\adg{x_1}]_\mathfrak{b}=\begin{bmatrix}
                    *_{2\times 2} & *\\
                    0 & a_1
                \end{bmatrix},
            [\adg{x_2}]_\mathfrak{b}=\begin{bmatrix}
                    *_{2\times 2} & *\\
                    0 & a_2
                \end{bmatrix},
            [\adg{x_3}]_\mathfrak{b}=\begin{bmatrix}
                    *_{2\times 2} & *\\
                    0 & a_3
                \end{bmatrix},
        \end{equation*}
        for some $a_1,a_2,a_3\in\R$. If so, 
        \begin{equation*}
            P_{x_6^*}=\begin{bmatrix}
                    0 & 0 & a_{1}\\
                    0 & 0 & a_{2}\\
                    0 & 0 & a_{3}
            \end{bmatrix}
        \end{equation*}
        which must have rank 1, or $\dim\Omega_{x_6^*}=2$, a contradiction. Therefore, $m=4$.
        
        Finally, let's classify $\MD{6}{4}$-algebras. By rewriting 
        \begin{equation*}
            [\adg{x_1}]_\mathfrak{b}=\begin{bmatrix}
                    A_1 & A_2\\
                    \boldsymbol{0} & A_3
                \end{bmatrix}, \text{ and }
            [\adg{x_2}]_\mathfrak{b}=\begin{bmatrix}
                    B_1 & B_2 \\
                    \boldsymbol{0} & B_3
                \end{bmatrix},
        \end{equation*}
        we have four possibilities for the $2\times 2$ matrices $A_3, B_3$ as follows:
        \begin{itemize}
            \item $A_3$ and $B_3$ are both of triangular form, i.e. $(A_3)_{21}=(B_3)_{21}=0$.
            \item $A_3=\lambda I_2$ and $B_3 = \begin{bmatrix}
                \mu & \zeta\\
                -\zeta & \mu
            \end{bmatrix}$ for some $\lambda, \mu\in\R, 0\neq \zeta \in \R$.
            \item $A_3=\begin{bmatrix}
                \mu & \zeta\\
                -\zeta & \mu
            \end{bmatrix}$ and $B_3 = \lambda I_2$ for some $\lambda,\mu\in\R, 0\neq \zeta \in \R$.
            \item $A_3=\begin{bmatrix}
                \lambda & \eta\\
                -\eta & \lambda
            \end{bmatrix}$ and $B_3=\begin{bmatrix}
                \mu & \zeta\\
                -\zeta & \mu
            \end{bmatrix}$ for some $\lambda,\eta,\mu,\zeta\in \R$ with $\eta \neq 0,\zeta \neq 0$.
        \end{itemize}
        
        Remark that the  change of basis $x_1\rightarrow x_1-\dfrac{\eta}{\zeta}x_2$ and the change of basis $x_1\leftrightarrow x_2$ bring respectively the fourth item and the third item to the second item. Hence, it is sufficient to consider only the two first possibilities. However, if $A_3$ and $B_3$ are both of triangular form, then 
        \begin{equation*}
            x_6^*([x_i,x_j])=0 \quad \forall 1\leq i,j\leq 5,
        \end{equation*}
        and hence, $\rank{(P_{x^*_6})}=1$, or $\dim\Omega_{x_6^*}=2$, a contradiction again. 
        
    Therefore, it suffices to consider the second item only: 
        \begin{equation*}
            A_3=\lambda I \text{ and } B_3 = \mu I + \zeta J \quad (\zeta \neq 0).
        \end{equation*}
    If so, by the same manner, we obviously obtain $\lambda \neq 0$. Now, by the following change of basis: 
        \begin{equation*}
            \left\{
                \begin{array}{ll}
                    x_1 & \rightarrow \frac{1}{\lambda} x_1\\
                    x_2 & \rightarrow\frac{1}{\zeta}(x_2-\mu x_1),
                \end{array}
            \right.
        \end{equation*}
        we may assume $\lambda=1,\mu=0$ and $\zeta=1$. 
        
    Hence, without loss of generality, we may assume from beginning that
        \begin{equation*}%\label{normal-form}
            [\adg{x_1}]_\mathfrak{b}=\begin{bmatrix}
                    A_1 & A_2\\
                    \boldsymbol{0} & I
                \end{bmatrix}, 
            [\adg{x_2}]_\mathfrak{b}=\begin{bmatrix}
                    B_1 & B_2 \\
                    \boldsymbol{0} & J
                \end{bmatrix}.
        \end{equation*}
    Similarly, we have two possibilities for the forms of $A_1$ and $B_1$ as follows:
        \begin{itemize}
            \item $A_1$ and $B_1$ are both of triangular form, i.e. $(A_1)_{21}=(B_1)_{21}=0$. 
            
            \item $A_1=\begin{bmatrix}
                \lambda & \eta\\
                -\eta & \lambda
            \end{bmatrix}$ and $B_1 = \begin{bmatrix}
                \mu & \zeta\\
                -\zeta & \mu
            \end{bmatrix}$ with $\eta^2+\zeta^2 \neq 0$. 
        \end{itemize}
    
    However, If $A_1$ and $B_1$ are both of triangular form then $\det(B_1^2+I)\neq 0$. It follows from Lemma \ref{lemma35} that we may assume $A_2=B_2=\boldsymbol{0}$. If so, it is elementary to check that  
        \begin{equation*}
            \left\{
                \begin{array}{ll}
                    x_4^*([x_1,x_5]) = x_4^*([x_1,x_6]) = x_4^*([x_2,x_5]) = x_4^*([x_2,x_6]) = 0,  \\
                    x_4^*([x_1,x_3])=x_4^*([x_2,x_3])=0.
                \end{array}
            \right.
        \end{equation*}
    Therefore, 
        \begin{equation*}
            P_{x_4^*}=\begin{bmatrix}
                0 & * & 0 & 0 \\
                0 & * & 0 & 0
            \end{bmatrix}
        \end{equation*}
        which has rank exactly 1. Hence, $\dim \Omega_{x_4^*}=2$, a contradiction again. 
        
    % Therefore, it suffices to assume that 
    %     \[
    %         A_1=\begin{bmatrix}
    %             \lambda & \eta\\
    %             -\eta & \lambda
    %         \end{bmatrix}, \text{ and } B_1 = \begin{bmatrix}
    %             \mu & \zeta\\
    %             -\zeta & \mu
    %         \end{bmatrix} \text{ with } (\eta,\zeta) \neq (0,0). 
    %     \] 
    %     %
    %     % 
    
    In summary, we may assume that 
        \begin{equation*}
            [\adg{x_1}]_{\mathfrak{b}}=\begin{bmatrix}
                    \lambda I+\eta J & A_2\\
                    \boldsymbol{0} & I
                \end{bmatrix}, 
            [\adg{x_2}]_{\mathfrak{b}}=\begin{bmatrix}
                    \mu I+\zeta J & B_2 \\
                    \boldsymbol{0} & J
                \end{bmatrix} \text{ with } \eta^2+\zeta^2\neq 0.
        \end{equation*}
        Besides, it is elementary to check that  
        \begin{equation*}
            \left\{
                \begin{array}{ll}
                    \det(\lambda I+\eta J-I) = 0   \Longleftrightarrow (\lambda,\eta)=(1,0)  \\
                    \det\left((\mu I+\zeta J)^2+I\right) = 0  \Longleftrightarrow (\mu,\zeta)= (0,\pm 1).
                \end{array}
            \right.
        \end{equation*}
        Hence, in light of Lemma \ref{lemma35}, we shall split the rest of the proof into two cases as followings:
        
        \begin{enumerate}
            \item \textbf{Case 1: $\boldsymbol{A_1=I}$ and $\boldsymbol{B_1=\pm J}$.} 
            If so, by the following change of basis: $x_4\rightarrow -x_4$ if necessary, we  can assume that $B_1=J$. In the other words,  
            \begin{equation*}
                [\adg{x_1}]_{\mathfrak{b}}=\begin{bmatrix}
                        I & A_2 \\
                        \boldsymbol{0} & I
                    \end{bmatrix}, 
                [\adg{x_2}]_{\mathfrak{b}}=\begin{bmatrix}
                        J & B_2\\
                        \boldsymbol{0} & J
                    \end{bmatrix}.
            \end{equation*}
            By the following change of basis: $x_5\rightarrow x_5+(B_2)_{12}x_3+(B_2)_{22}x_4$, we can assume $(B_2)_{12}=(B_2)_{22}=0$. If so, the commutation of $\adg{x_1}$ and $\adg{x_2}$ implies that 
            \begin{equation*}
                (A_2)_{11}=(A_2)_{22}, (A_2)_{12}=-(A_2)_{21}.
            \end{equation*}
            In the other words, we can assume that
            \begin{equation*}
                [\adg{x_1}]_{\mathfrak{b}}=\begin{bmatrix}
                        1 & 0 & \nu & \theta\\
                        0 & 1 & -\theta & \nu\\
                        0      & 0      & 1      & 0\\
                        0      & 0      & 0      & 1 
                    \end{bmatrix}, 
                [\adg{x_2}]_{\mathfrak{b}}=\begin{bmatrix}
                        0 & 1 & \chi & 0\\
                        -1 & 0 & \omega & 0\\
                        0      & 0      & 0      & 1\\
                        0      & 0      & -1     & 0
                    \end{bmatrix}.
            \end{equation*}
            Let's denote this Lie algebra by $L(\nu,\theta,\chi,\omega)$. Then, via the following change of basis:
            \begin{equation*}
                \left\{
                    \begin{array}{ll}
                        x_3\rightarrow (\chi+\omega)x_3-(\chi-\omega)x_4  \\
                        x_4\rightarrow (\chi -\omega) x_3+(\chi+\omega)x_4 \\
                        x_5\rightarrow x_5-x_6+\chi x_3+\omega x_4 \\
                        x_6\rightarrow x_5+x_6
                    \end{array}
                \right. \quad \text{ (if } \chi^2+\omega^2\neq 0),
            \end{equation*}
            we easily see that
            \begin{equation}\label{1}
                L(\nu,\theta,\chi,\omega) \cong L(\nu,\theta,1,0) \quad \text{ (if } \chi^2+\omega^2\neq 0).
            \end{equation}
        
        Remark that by basis changing: $x_3 \rightarrow -x_3$ if necessary, we can assume that $\nu \geq 0$.
            
        Similarly, via the following change of basis:
                \begin{equation*}
                    \left\{\begin{array}{ll}
                        x_3\rightarrow \nu x_3-\theta x_4  \\
                        x_4\rightarrow \theta x_3+\nu x_4,
                    \end{array}\right. \quad \text{ (if } \nu^2+\theta^2\neq 0),
                \end{equation*}
            we easily see that
            \begin{equation}\label{3}
                L(\nu,\theta,0,0) \cong L(1,0,0,0) \quad \text{ (if } \nu^2+\theta^2\neq 0).
            \end{equation}
            In summary. we conclude from the equations (\ref{1}) and (\ref{3}) that
            \begin{equation*}
                L(\nu,\theta,\chi,\omega) \cong \left\{\begin{array}{ll}
                    L(0,0,0,0) & \text{if } \nu^2+\theta^2=\chi^2+\omega^2=0 \\
                    L(1,0,0,0) & \text{if } \nu^2+\theta^2\neq 0, \text{ and } \chi^2+\omega^2= 0 \\
                    L(\nu,\theta,1,0) \quad (\text{with } \nu\geq 0) & \text{if } \chi^2+\omega^2 \neq 0 \\
                \end{array}\right.
            \end{equation*}
            Remark that $L(1,0,0,0)$ and $L(\nu,\theta,1,0)$ (with $\nu\geq 0$) are respectively isomorphic to $\mathfrak{s}_{6,211}$ and $\mathfrak{s}_{6,225}$ listed in \cite{Snob}. While $L(0,0,0,0)$ belongs to the family $\mathfrak{s}_{6,226}$ listed in \cite{Snob}. 
            
        \item \textbf{Case 2. Either $\boldsymbol{A_1\neq I}$ or $\boldsymbol{B_1\neq \pm J}$}. If so, we can assume that  $A_2=B_2=\boldsymbol{0}$ [Lemma \ref{lemma35}], or
            \begin{equation*}
                [\adg{x_1}]_\mathfrak{b}=\begin{bmatrix}
                        \lambda I + \eta J & \boldsymbol{0}\\
                        \boldsymbol{0} & I
                    \end{bmatrix},
                [\adg{x_2}]_\mathfrak{b}=\begin{bmatrix}
                        \mu I +\zeta J & \boldsymbol{0} \\
                        \boldsymbol{0} & J
                    \end{bmatrix} \text{ with } \eta^2+\zeta^2\neq 0.
            \end{equation*}
            Let's denote the corresponding Lie algebra as $L(\lambda,\eta,\mu,\zeta)$. 
            Then for any $F=a_1x_1^*+\cdots+a_6x_6^* \in\Gc^*$, we have
                \begin{equation*}
                    P_{F}=\begin{bmatrix}
                        \lambda a_3-\eta a_4 & \eta a_3+\lambda a_4 & a_5 & a_6 \\
                        \mu a_3 - \zeta a_4 & \zeta a_3+\mu a_4 & -a_6 & a_5
                    \end{bmatrix}.
                \end{equation*}
            Therefore, $\rank{(P_F)}=2$ for any $F\in\Gc^*$ with $F|_{\Gc^1}\neq 0$ if and only if $\lambda\zeta-\mu\eta \neq 0$. In the other words, $L(\lambda,\eta,\mu,\zeta)$ is an $\MD{6}{4}$-algebra if and only if 
            \begin{equation}\label{lambda}
                \lambda\zeta-\mu\eta \neq 0.
            \end{equation}
        Furthermore, by the following change of basis: 
        \begin{equation*}
            \left\{\begin{array}{ll}
                x_1 \rightarrow & \frac{1}{\lambda\zeta-\mu\eta}(\zeta x_1-\eta x_2)  \\
                x_2 \rightarrow & \frac{1}{\lambda\zeta-\mu\eta}(-\mu x_1+\lambda x_2) \\
                x_3 \leftrightarrow & x_5 \\
                x_4 \leftrightarrow & x_6,
            \end{array}\right.
        \end{equation*}
        we can see that
        \begin{equation}\label{condition}
            L(\lambda,\eta,\mu,\zeta)\cong L(\dfrac{\zeta}{\lambda\zeta-\mu\eta},-\dfrac{\eta}{\lambda\zeta-\mu\eta},-\dfrac{\mu}{\lambda\zeta-\mu\eta},\dfrac{\lambda}{\lambda\zeta-\mu\eta}).
        \end{equation} 
        
        Similarly, by the following change of basis: $x_4\rightarrow -x_4$, we get
            \begin{equation}\label{eq413}
                L(\lambda,\eta,\mu,\zeta) \cong L(\lambda,-\eta,\mu,-\zeta);
            \end{equation}
        and by the following change of basis:
                \begin{equation*}
                    \left\{
                        \begin{array}{lr}
                            x_2 \rightarrow &  -x_2\\
                            x_4 \rightarrow & - x_4\\
                            x_5  \rightarrow & x_6\\
                            x_6 \rightarrow  & x_5
                        \end{array}
                    \right.,
                \end{equation*}
        we get 
                \begin{equation}\label{eq411}
                    L(\lambda,\eta,\mu,\zeta) \cong L(\lambda,-\eta,-\mu,\zeta).
                \end{equation}
                
        \begin{itemize}
            \item If $\eta=0$ then it follows from the equation (\ref{lambda}) that $\lambda\zeta \neq 0$. Hence, the equation (\ref{condition}) becomes
            \begin{equation}\label{s226}
                L(\lambda,0,\mu,\zeta) \cong L(\dfrac{1}{\lambda},0,\dfrac{-\mu}{\lambda\zeta},\dfrac{1}{\zeta}).
            \end{equation}
            
            By combining the equations (\ref{eq413}), (\ref{eq411}) and (\ref{s226}), we obtain 
            \begin{equation*}
                L(\lambda,0,\mu,\zeta) \cong L(\lambda',0,\mu',\zeta') 
            \end{equation*}
            where $0<\zeta'\leq 1$, $\mu'\geq 0$, $\lambda' \neq 0$; and if $\zeta'=1$ then $|\lambda'|\leq 1$.
            This class of MD-algebras coincides with the family $\mathfrak{s}_{6,226}$ in \cite{Snob}, except some non MD-algebras cases. Hence, we also use the notation $\mathfrak{s}_{6,226}$ to denote this class. 
            
            \item If $\eta\neq 0$  then, by the same manner, we obtain
            \begin{equation*}
                L(\lambda,\eta,\mu,\zeta) \cong L(\lambda',\eta',\mu',\zeta') 
            \end{equation*}
            where $\lambda'\eta'-\mu'\zeta' >0$ and $\mu'\geq 0$. This class of MD-algebras coincides with the family $\mathfrak{s}_{6,228}$ in \cite{Snob}, except some non MD-algebras cases. Hence, we also denote this class by $\mathfrak{s}_{6,228}$. The proof is completed.
        \end{itemize}
        \end{enumerate}
    \end{proof}

\section[One step solvable MD{n-2}{n}-algebras-2]{One-step solvable $\boldsymbol{\MD{n}{n-2}}$-algebras which have low-dimensional derived algebras}\label{section-4}

    In order to obtain a complete classification of 1-step solvable $\MD{n}{n-2}$-algebras, we need to solve the problem for $\dim \Gc^1 \leq 2$. 
    The classification of Lie algebras which have low-dimensional derived algebras has been studied by T. Janisse \cite{Janisse10}, C. Sch\"obel \cite{Schobel93}, Vu A. L. et al. \cite{Vuthieu20}, F. Levstein \& A. L. Tiraboschi \cite{Levstein99}, and C.~Bartolone et al. \cite{Bartolone2011}. 
    % We first summary their classifications of solvable Lie algebras which have low-dimensional derived algebras and are not 2-step nilpotent.
    
    \begin{proposition}[\cite{Janisse10,Schobel93,Vuthieu20}]
        Let $\Gc$ be a real $n$-dimensional Lie algebra with $n\geq 5$.
        \begin{itemize}
            \item If $\dim\Gc^1\leq 2$ then $\Gc^1$ is commutative.
            
            \item If $\dim\Gc^1=1$ then $\Gc$ is an trivial extension of either $\textnormal{aff}(\R)$ or $\mathfrak{h}_{2m+1}$ ($n\geq 2m+1, m \geq 1$) %isomorphic to one of the following forms:
                %\begin{itemize}
                %    \item[(i)] $\textnormal{aff}(\R)\oplus \R^{n-2}$.
                %    \item[(ii)] %$\mathfrak{h}_{2m+1}\oplus \R^{n-2m-1}$ ($n\geq 2m+1, m \geq 1$).
                %\end{itemize}
            \item If $\dim\Gc^1=2$ and $\Gc^1$ is not completely contained in the centre $C(\Gc)$ of $\Gc$, then $\Gc$ is isomorphic to one of the following forms:
                \begin{itemize}
                    \item[(i)] $\Gc_{5+2k}:=\langle x_1, x_2, \dots,x_{5+2k}\rangle \ (n=5+2k,k\in \mathbb{N})$ with $[x_3,x_4]=x_1$ and
                        \begin{equation*}
                            [x_3,x_1]=[x_4,x_5]=\cdots=[x_{4+2k},x_{5+2k}]=x_2.
                        \end{equation*}
                    
                    \item[(ii)] $\Gc_{6+2k,1}:=\langle x_1,x_2,\dots,x_{6+2k}\rangle \ (n=6+2k,k\in \mathbb{N})$ with $[x_3,x_1]=x_1$ and 
                        \begin{equation*}
                            [x_3,x_4]=[x_5,x_6]=\cdots=[x_{5+2k},x_{6+2k}]=x_2.
                        \end{equation*}
                        
                    \item[(iii)] $\Gc_{6+2k,2}:=\langle x_1,x_2,\dots,x_{6+2k}\rangle \ (n=6+2k,k\in \mathbb{N})$ with $[x_3,x_4]=x_1$ and 
                        \begin{equation*}
                            [x_3,x_1]=[x_5,x_6]=\cdots=[x_{5+2k},x_{6+2k}]=x_2.
                        \end{equation*}
                    \item[(iv)] $\textnormal{aff}(\R)\oplus\mathfrak{h}_{2m+1}$ ($m \geq 1$).
                    \item[(v)] A trivial extension of one of Lie algebras listed above in (i), (ii), (iii) and (iv).
                    \item[(vi)] A trivial extension of $\textnormal{aff}(\R)\oplus\textnormal{aff}(\R)$.
                    \item[(vii)] A trivial extension of a Lie algebra $\Hc$ of dimension less than 5 such that $\dim\Hc^1=2$ and $\Hc^1$ is not contained in the centre of $\Hc$.
                \end{itemize}
        \end{itemize}
    \end{proposition}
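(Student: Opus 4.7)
My plan is to establish the three bulleted assertions in sequence, each reducing to a structural analysis of how $\Gc$ acts on its derived ideal $\Gc^1$.

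For the first assertion, the only case requiring work is $\dim\Gc^1=2$. I would argue by contradiction, supposing $\Gc^1\cong\textnormal{aff}(\R)=\langle x,y:[x,y]=y\rangle$. Choose a complement $\langle z_1,\ldots,z_{n-2}\rangle$ and write the unknowns $[z_i,x]=\alpha_ix+\beta_iy$, $[z_i,y]=\gamma_ix+\delta_iy$, $[z_i,z_j]=p_{ij}x+q_{ij}y$. Applying the Jacobi identity to the triples $(z_i,x,y)$, $(z_i,z_j,y)$ and $(z_i,z_j,x)$ in turn forces $\alpha_i=\gamma_i=p_{ij}=0$, so every bracket lies in $\langle y\rangle$, contradicting $\dim\Gc^1=2$.

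For the second assertion, write $\Gc^1=\langle y\rangle$ and encode the bracket by the skew form $[u,v]=\omega(u,v)y$. If $y\in C(\Gc)$, then $\omega$ factors through $\Gc/\langle y\rangle$; choosing a symplectic basis modulo $\ker\omega$ and lifting it together with a basis of $\ker\omega$ exhibits $\Gc$ as $\R^{n-2m-1}\oplus\mathfrak{h}_{2m+1}$. If $y\notin C(\Gc)$, pick $x$ with $[x,y]=y$, set $C:=\{z\in\Gc:[z,x]=[z,y]=0\}$, and verify via Jacobi that every $z$ can be translated by an element of $\langle x,y\rangle$ into $C$, so $\dim C=n-2$. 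Triviality of $C(\textnormal{aff}(\R))$ gives $C\cap\langle x,y\rangle=0$, while $[C,C]\subseteq C\cap\Gc^1=0$ makes $C$ abelian, yielding $\Gc\cong\R^{n-2}\oplus\textnormal{aff}(\R)$.

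For the third assertion, the key observation is that the adjoint representation $\rho:\Gc\to\mathfrak{gl}(\Gc^1)\cong\mathfrak{gl}(2,\R)$ has abelian image: for any $u,v\in\Gc$, the element $[u,v]\in\Gc^1$ acts trivially on the abelian ideal $\Gc^1$, so $[\rho(u),\rho(v)]=\rho([u,v])=0$. Proposition \ref{triangular} then provides a basis of $\Gc^1$ in which every $\rho(z)$ is block-triangular of the prescribed $aI+bJ$ form. I would then stratify by whether $\rho(\Gc)$ contains a non-nilpotent element (producing the $\textnormal{aff}(\R)$-factor cases (ii), (iv), (vi)) or is entirely nilpotent (producing the Heisenberg-type families (i), (iii)), and further by the rank and kernel of the resulting 2-cocycle $\Gc/\Gc^1\wedge\Gc/\Gc^1\to\Gc^1$ defined up to coboundary by the bracket. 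Within each stratum, coboundary reductions bring the brackets into the advertised normal forms, and surplus central directions in $\ker\rho$ account for the trivial extensions (v) and (vii). The main obstacle lies precisely here: ensuring that the cocycle normalization in each stratum is exhaustive and that the resulting families are pairwise non-isomorphic requires careful combinatorial bookkeeping, which is why the original treatments \cite{Schobel93,Janisse10,Vuthieu20} require dedicated arguments, whereas Parts 1 and 2 reduce cleanly to Jacobi-identity computations and the standard symplectic normal-form argument.
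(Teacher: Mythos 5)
This proposition is imported verbatim from the cited references; the paper gives no proof of it, so there is nothing in the paper to compare your argument against and it must be judged on its own. Your first two bullets are genuine, complete proofs. The Jacobi computations on $(z,x,y)$ and $(z_i,z_j,y)$ do force every bracket into the line $\langle y\rangle$ under the assumption $\Gc^1\cong\textnormal{aff}(\R)$, contradicting $\dim\Gc^1=2$; and the two-case analysis for $\dim\Gc^1=1$ (symplectic normal form of $\omega$ when $\Gc^1$ is central, the centralizer $C$ of $\langle x,y\rangle$ as a complementary abelian ideal otherwise) is sound. The only point worth making explicit there is that $[C,C]=0$ is not automatic from $[C,C]\subseteq\Gc^1$, since $y\notin C$; it follows from Jacobi applied to $(c_1,c_2,x)$, which kills the $y$-component of $[c_1,c_2]$.

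The third bullet is where essentially all of the content of the proposition lives, and there you have written a plan rather than a proof. Observing that $\rho(\Gc)\subseteq\mathfrak{gl}(2,\R)$ is abelian and simultaneously block-triangularizable, and announcing a stratification by nilpotency of $\rho(\Gc)$ together with coboundary reductions of the cocycle $\Gc/\Gc^1\wedge\Gc/\Gc^1\to\Gc^1$, does not by itself yield the seven explicit families (i)--(vii): you never determine which abelian subalgebras of $\mathfrak{gl}(2,\R)$ can actually occur as $\rho(\Gc)$ and how each is matched to the items of the list, you carry out no cocycle normalization in any stratum, and you prove neither exhaustiveness nor pairwise non-isomorphism --- indeed you concede all of this and defer to the references. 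Your proposed dichotomy is also not a clean partition of the list: item (vii) contains algebras with all-nilpotent $\rho$ (e.g.\ trivial extensions of $\mathfrak{n}_{4,1}$) as well as algebras with non-nilpotent $\rho$ (e.g.\ trivial extensions of $\textnormal{aff}(\C)$), yet it appears in neither of your two strata. Deferring to the literature here is legitimate --- it is exactly what the paper does --- but as a standalone proof the main assertion of the proposition remains unestablished.
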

    It is easy to see that $\Gc_{5+2k}, \Gc_{6+2k,1}$, $\Gc_{6+2k,2}$, $\textnormal{aff}(\R)\oplus \mathfrak{h}_{2m+1}$ and any trivial extension of $\textnormal{aff}(\R)\oplus \textnormal{aff}(\R)$ listed above are not MD-algebras for every $k$. For example, $\Gc_{5+2k}$ has a coadjoint orbit of dimension 2 and a coadjoint orbit of dimension $4+2k$:
    \begin{equation*}
        \dim \Omega_{x_1^*} = 2, \dim \Omega_{x_2^*} = 4+2k.
    \end{equation*}
    % Similarly, 
    % \begin{itemize}
    %     \item $\textnormal{aff}(\R)\oplus \textnormal{aff}(\R)$ is not an MD-algebra because it has a coadjoint orbit of dimension 2 and a coadjoint orbit of dimension 4.
    %     \item $\textnormal{aff}(\R)\oplus \mathfrak{h}_{2m+1}$ is not an MD-algebra because it has a coadjoint orbit of dimension 2 and a coadjoint orbit of dimension $2+2m$.
    % \end{itemize}
    % Remark that another reason so that the two later Lie algebras are not MD-algebras is that a nontrivial extension of any non-commutative Lie algebra is not an MD-algebra \cite[Theorem 3.1]{HHV21}. 
   \begin{corollary}\label{lemma-non2step}
       Let $\Gc$ be an $\MD{n}{n-2}$-algebra with $n\geq 6$. 
       \begin{itemize}
            \item If $\dim\Gc^1=1$ then $\Gc$ is isomorphic to $\mathfrak{h}_{2m+1}\oplus \R$ where $m=\frac{n-2}{2}$.
            \item If $\left\{\begin{array}{ll}
                \dim\Gc^1=2 \\
                \Gc^1\nsubseteq C(\Gc) 
            \end{array}\right.$
            then $\Gc$ is isomorphic to $\textnormal{aff}(\C)\oplus \R^2$.
        \end{itemize}
   \end{corollary}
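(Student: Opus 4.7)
The plan is to run through the list produced by the preceding proposition and discard, case by case, every candidate whose non-trivial coadjoint orbits are not all of dimension $n-2$. The two key facts that make this mechanical are: (a) a trivial extension $\Hc\oplus\R^k$ has exactly the same set of non-trivial coadjoint orbit dimensions as $\Hc$, and (b) for a direct sum $\Hc_1\oplus\Hc_2$, the non-trivial coadjoint orbits decompose as products, so their possible dimensions are $\{\dim\Omega_{F_1}+\dim\Omega_{F_2}:F_i\in\Hc_i^*\}$ where at least one summand is positive. Both are straightforward consequences of Proposition \ref{rankbf} applied to a block-diagonal basis.

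For the first item, the proposition tells us $\Gc$ is a trivial extension of either $\textnormal{aff}(\R)$ or $\mathfrak{h}_{2m+1}$ for some $m\geq 1$. A trivial extension of $\textnormal{aff}(\R)$ has non-trivial orbits of dimension $2$ (since $\textnormal{aff}(\R)$ is an $\MD{2}{2}$-algebra by Proposition \ref{sonviet}), which would force $n-2=2$, contradicting $n\geq 6$. A trivial extension $\mathfrak{h}_{2m+1}\oplus\R^k$ has non-trivial orbits of dimension $2m$, and forcing $2m=n-2$ together with $(2m+1)+k=n$ gives $k=1$ and $m=(n-2)/2$, yielding the claimed $\mathfrak{h}_{2m+1}\oplus\R$.

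For the second item, the preceding proposition lists the candidates in subcases (i)--(vii). The paper already remarks that (i)--(iv) and (vi) are not MD-algebras; a direct application of Proposition \ref{rankbf} at two suitably chosen functionals (e.g. $x_1^*$ versus $x_2^*$ in case (i), or an $F$ whose restriction to only one summand is nonzero versus one that is nonzero on both in case (iv)) confirms this and propagates to case (v) via observation (a). Thus only case (vii) remains: $\Gc=\Hc\oplus\R^{n-\dim\Hc}$ with $\dim\Hc\leq 4$, $\dim\Hc^1=2$, and $\Hc^1\not\subset C(\Hc)$. By (a), $\Hc$ itself must be MD with non-trivial orbits of dimension $n-2\geq 4$. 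But orbit dimensions are bounded by $\dim\Hc\leq 4$, forcing $n-2=4$, hence $n=6$ and $\dim\Hc=4$. Consequently $\Hc$ is an $\MD{4}{4}$-algebra, and since $\dim\Hc^1=2>0$ rules out the abelian case, Proposition \ref{sonviet} forces $\Hc\cong\textnormal{aff}(\C)$, giving $\Gc\cong\textnormal{aff}(\C)\oplus\R^2$.

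There is essentially no obstacle beyond bookkeeping: the hard work has been done in the cited classification and in Proposition \ref{sonviet}. The only item that requires a small direct computation is the confirmation that the families $\Gc_{5+2k}$, $\Gc_{6+2k,1}$, $\Gc_{6+2k,2}$, $\textnormal{aff}(\R)\oplus\mathfrak{h}_{2m+1}$, and the trivial extensions of $\textnormal{aff}(\R)\oplus\textnormal{aff}(\R)$ genuinely fail to be MD-algebras; this is a routine rank calculation using Proposition \ref{rankbf} on two distinguishing functionals for each family.
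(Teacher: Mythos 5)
Your proposal is correct and follows essentially the same route the paper takes: the paper likewise derives the corollary by discarding, from the classification proposition, the families $\Gc_{5+2k}$, $\Gc_{6+2k,1}$, $\Gc_{6+2k,2}$, $\textnormal{aff}(\R)\oplus\mathfrak{h}_{2m+1}$ and the trivial extensions of $\textnormal{aff}(\R)\oplus\textnormal{aff}(\R)$ via rank computations of $\left(F([x_i,x_j])\right)$, and then pinning down the surviving trivial extensions by the dimension counts you give. Your explicit reduction of case (vii) to Proposition \ref{sonviet} matches the paper's intended argument, so there is nothing to add.
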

   
%   Therefore, 
%     \begin{itemize}
%         \item If $\Gc$ is an $\MD{n}{n-2}$-algebra ($n\geq 5$) with $\dim\Gc^1=1$ then $\Gc$ is isomorphic to $\mathfrak{h}_{2m+1}\oplus \R$ where $m=\frac{n-2}{2}$.
%         \item If $\Gc$ is an $\MD{n}{n-2}$-algebra ($n\geq 5$) in which $\left\{\begin{array}{ll}
%             \dim\Gc^1=2 \\
%             \Gc^1\nsubseteq C(\Gc) 
%         \end{array}\right.$
%         then $\Gc$ is isomorphic to $\textnormal{aff}(\C)\oplus \R^2$.
%     \end{itemize}
    
    %These are Lie algebras defined in the first item of Theorem \ref{theorem62}. 
    Now, we will investigate the remaining case: 
    \begin{equation*}
        \left\{\begin{array}{ll}
                \dim\Gc^1=2 \\
                \Gc^1\subseteq C(\Gc) .
            \end{array}\right.
    \end{equation*}
    Firstly, it is easy to check that $\Gc^1\subseteq C(\Gc)$ if and only if $\Gc$ is 2-step nilpotent, i.e.  $\Gc_2:=[[\Gc,\Gc],\Gc]$ is trivial (a 2-step nilpotent Lie algebra is also called a metabelian Lie algebra).

    Because $\Gc$ is 2-step nilpotent  with $\dim\Gc^1=2$, there is a basis $\mathfrak{b}:=\{x_1,\dots,x_n\}$ of $\Gc$ such that $\Gc^1=\langle x_{n-1},x_n\rangle$ and $[x_i,x_{n-1}]=[x_i,x_n]=0$ for all $i$. Therefore, $\Gc$ determines a pair of $(n-2)\times (n-2)$ skew-symmetric matrices $(M,N)$ defined by 
    \begin{equation}\label{def-MN}
        (M)_{ij}: = x_{n-1}^*([x_i,x_j]); (N)_{ij}: = x_{n}^*([x_i,x_j]).
    \end{equation}
    Since $\dim\Gc^1=2$, $M$ and $N$ are linearly independent in the sense that there is no $(0,0) \neq (\alpha,\beta)$ such that $\alpha M+\beta N=\boldsymbol 0$. The matrices $(M,N)$ are called the {\em associated matrices} of $\Gc$ with respect to the basis $\mathfrak{b}$ (we also say that $\Gc$ is associated by the matrices $(M,N)$ with respect to $\mathfrak{b}$). Conversely, Let $(M,N)$ be any pair of skew-symmetric matrices of size $(n-2)\times (n-2)$ which are linearly independent. Then we can define a Lie algebra $\Gc$ of dimension $n$ as follows: $\Gc$ is spanned by a basis $\{x_1, \dots, x_n\}$, and the Lie brackets are defined via that basis as follows:
    \begin{equation*}
        \left\{\begin{array}{ll}
            [x_i,x_{n-1}] = [x_i,x_n] = 0 &  1 \leq i \leq n  \\
            {}[x_i,x_j] = (M)_{ij} x_{n-1}+(N)_{ij}x_n & 1 \leq i,j\leq n-2.
        \end{array}\right.
    \end{equation*}
    In 1999, F. Levstein \& A. L. Tiraboschi \cite{Levstein99} proved the corresponding between the isomorphism of two such 2-step nilpotent Lie algebras with the (strict) congruence of vector spaces spanned by their associated matrices, as stated in the following proposition.
    \begin{proposition}\cite{Levstein99}
        Let $\Gc$ and $\Gc'$ be two 2-step nilpotent Lie algebras which have $\dim\Gc^1=\dim\Gc'^1=2$. Suppose that $\Gc$ and $\Gc'$ are associated (with respect to some bases) with $(M,N)$ and $(M',N')$ respectively. Then $\Gc$ is isomorphic to $\Gc'$ if and only if there is a nonsingular matrix $T$ so that 
        \begin{equation*}
            T \cdot \langle M, N\rangle\cdot T^t = \langle M', N'\rangle
        \end{equation*}
    \end{proposition}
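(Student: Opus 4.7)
The plan is to exploit the fact that a 2-step nilpotent Lie algebra is completely determined by the alternating bilinear map induced by the bracket on the quotient by its derived ideal. Since $\Gc^1 \subseteq C(\Gc)$, the bracket $[\cdot,\cdot]: \Gc \times \Gc \to \Gc^1$ factors through a well-defined alternating bilinear map $\beta: (\Gc/\Gc^1) \times (\Gc/\Gc^1) \to \Gc^1$, and in the chosen bases the pair $(M,N)$ is precisely the coordinate representation of $\beta$: the two components of $\beta$ along the basis $\{x_{n-1}, x_n\}$ of $\Gc^1$, read off the basis $\{x_1,\dots,x_{n-2}\}$ of a complement. The strategy is to translate the intrinsic statement ``$\Gc \cong \Gc'$'' into a matrix statement about how $\beta$ and $\beta'$ are related.

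For the forward implication, suppose $\varphi: \Gc \to \Gc'$ is a Lie algebra isomorphism. Since the derived ideal is an isomorphism invariant, $\varphi(\Gc^1) = \Gc'^1$, so $\varphi$ induces a linear isomorphism $\bar{\varphi}: \Gc/\Gc^1 \to \Gc'/\Gc'^1$ represented by some invertible $(n-2)\times(n-2)$ matrix $T$, and an isomorphism $\varphi|_{\Gc^1}$ represented by an invertible $2 \times 2$ matrix $S = (s_{ij})$. Writing $\varphi(x_i) \equiv \sum_k T_{ki} x'_k \pmod{\Gc'^1}$ for $i \leq n-2$ and expanding $\varphi([x_i,x_j]) = [\varphi(x_i), \varphi(x_j)]$ in coordinates, the central contributions on the right vanish by 2-step nilpotency and I would obtain
\begin{equation*}
    s_{11} M + s_{12} N = T^t M' T, \qquad s_{21} M + s_{22} N = T^t N' T .
\end{equation*}
The left-hand sides span $\langle M, N\rangle$ (since $S$ is invertible), so $\langle M,N\rangle = T^t \langle M', N'\rangle T$. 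Taking $T_0 = (T^t)^{-1}$ gives the required $T_0 \langle M,N\rangle T_0^t = \langle M', N'\rangle$.

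For the converse, given $T$ with $T\langle M,N\rangle T^t = \langle M',N'\rangle$, the matrices $TMT^t, TNT^t$ are linearly independent (as $T$ is invertible and $M,N$ are) and thus form a basis of $\langle M', N'\rangle$; the change-of-basis from $\{M',N'\}$ to $\{TMT^t, TNT^t\}$ supplies an invertible $2\times 2$ matrix $S$. I would then define $\varphi: \Gc \to \Gc'$ on the basis by letting it act through (the appropriate inverse/transpose of) $T$ on $\{x_1, \dots, x_{n-2}\}$ and through $S$ on $\{x_{n-1}, x_n\}$. Verifying that $\varphi$ preserves brackets reduces exactly, via the defining identities \eqref{def-MN}, to the matrix identity used to define $S$, and $\varphi$ is a linear bijection by construction, hence a Lie algebra isomorphism.

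The main obstacle is purely bookkeeping: keeping straight the conventions of ``matrix acts on coordinates'' versus ``matrix acts on basis vectors'' so that the congruence comes out on the correct side (e.g.\ $T\,\cdot\,T^t$ rather than $T^t\,\cdot\,T$), and making sure $S$ is indeed invertible (which follows because both pairs $\{M,N\}$ and $\{M',N'\}$ are linearly independent and span a common 2-dimensional subspace after the $T$-congruence). The conceptual content, namely that a 2-step nilpotent Lie algebra with $\dim \Gc^1 = 2$ is classified by the 2-dimensional subspace of skew-symmetric matrices $\langle M,N\rangle$ modulo simultaneous congruence $A \mapsto TAT^t$, is transparent once the induced bilinear form on the quotient is in place.
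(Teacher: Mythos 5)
Your argument is correct; note, however, that the paper itself offers no proof of this proposition -- it is quoted directly from Levstein--Tiraboschi \cite{Levstein99} -- so there is nothing in the paper to compare against. Your route (passing to the induced alternating form on $\Gc/\Gc^1$, using that $\Gc^1$ is central so that the ambiguity modulo $\Gc'^1$ does not affect brackets, and matching the invertible $2\times 2$ change of basis on $\Gc^1$ with the simultaneous congruence by $T$) is the standard proof of this fact, and your identified bookkeeping issues (side of the congruence, invertibility of $S$) are indeed the only delicate points and are resolved exactly as you indicate.
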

    
    In particular, if the pencils $M-\rho N$ and $M'-\rho N'$ are strictly congruent, i.e. there is a nonsingular matrix $T$ (which does not depend on $\rho$) so that $T(M-\rho N)T^t=M'-\rho N'$,  then their associated Lie algebras are isomorphic. Although the converse of the later statement is not true in general, but the statement is still useful to classify Lie algebras in this paper. The classification (up to strict congruence) of pencils of complex/real matrices which are either symmetric or skew-symmetric was solved by R. C. Thompson \cite{Thompson91} (the skew-symmetric case was classified in \cite{Scharlau76}). Because we are concerning with real skew-symmetric matrices, we will state his theorem for the case of pencils of real skew-symmetric matrices only. 
    \begin{proposition}\cite[Theorem 2]{Thompson91}\label{pencil}
        Let A and B be real skew-symmetric matrices. Then a simultaneous (real) congruence of $A$ and $B$ exists reducing $A-\rho B$ to a direct sum of types $m, \infty,\alpha$, and $\beta$, where 
        \begin{equation*}
            \begin{array}{cc}
                m:=\begin{bmatrix}
                    \boldsymbol 0 & L_{e}(\rho) \\
                    -L_e(\rho)^t & \boldsymbol 0
                \end{bmatrix}, 
                \infty: = \begin{bmatrix}
                    \boldsymbol 0 & \Delta_f -\rho \Lambda_f \\ -\Delta_f+\rho\Lambda_f & \boldsymbol 0
                \end{bmatrix}, \\
                \alpha: =\begin{bmatrix}
                    \boldsymbol 0 & (a -\rho) \Delta_g + \Lambda_g \\ (a+\rho)\Delta_g-\Lambda_g & \boldsymbol 0
                \end{bmatrix}, \beta:=\begin{bmatrix}
                    \boldsymbol 0 & \Gamma_h(\rho) \\
                    -\Gamma_h(\rho) & \boldsymbol 0 
                \end{bmatrix}
            \end{array}
        \end{equation*}
        with 
        \begin{equation*}
            L_e(\rho):=
                \begin{bmatrix}
                    \rho & \\
                    1 & \ddots & \\
                      & \ddots & -\rho \\
                      &        & 1
                \end{bmatrix}_{(e+1)\times e}, \Delta_f: = \begin{bmatrix}
                    & & 1 \\
                    & \iddots \\
                    1
                \end{bmatrix}_{f\times f},\Lambda_f: = \begin{bmatrix}
                    & & & 0\\
                    & & \iddots & 1\\
                    &\iddots & \iddots\\
                    0 & 1
                \end{bmatrix}_{f\times f}, 
        \end{equation*}
        and
        \begin{equation*}
            \Gamma_g(\rho) := \begin{bmatrix}
                    \boldsymbol 0 & \begin{bmatrix}
                        & & & R \\
                        & &\iddots& S \\
                        & \iddots & \iddots \\
                        R & S 
                    \end{bmatrix} \\
                    \begin{bmatrix}
                        & & & R \\
                        & &\iddots& S \\
                        & \iddots & \iddots \\
                        R & S 
                    \end{bmatrix} & \boldsymbol 0
                \end{bmatrix}_{g\times g}, R: = \begin{bmatrix}
                    c & d-\rho \\ d-\rho & -c
                \end{bmatrix}, S: = \begin{bmatrix}
                    0 & 1 \\ 1 & 0
                \end{bmatrix}
        \end{equation*}
        for some $a,c,d\in\mathbb R:c\neq 0$.
    \end{proposition}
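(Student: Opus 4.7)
The plan is to derive this classification by adapting the Kronecker--Weierstrass normal form for matrix pencils to the more restrictive setting of simultaneous congruence $A\mapsto TAT^t,\ B\mapsto TBT^t$, using skew-symmetry to force the left and right invariant structures to coincide. First I would split the pencil $A-\rho B$ into a singular part, where the kernel of $A-\rho B$ over the rational function field $\R(\rho)$ is nontrivial, and a regular part, where $\det(A-\rho B)$ is a nonzero polynomial. On the singular part one peels off blocks iteratively: starting from a kernel vector $v(\rho)$ of minimal polynomial degree $e$, a single congruence produces one block of type $m$ of size $(2e+1)\times(2e+1)$ built from $L_e(\rho)$, and skew-symmetry guarantees that one such block per minimal index suffices, since the left and right minimal indices coincide.

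On the regular part I would introduce the generalized eigenvalues (the roots of $\det(A-\rho B)$ in $\C$, together with an eigenvalue at infinity coming from any residual kernel of $B$) and show that the generalized eigenspaces are simultaneously $A$- and $B$-orthogonal: if $\lambda\neq\mu$, then the $B$-pairing between the $\lambda$- and $\mu$-generalized eigenspaces must vanish, because skew-symmetry forces $B(v,w)$ to satisfy incompatible identities when $v,w$ lie in distinct generalized eigenspaces. This yields an orthogonal direct-sum decomposition of the regular part into pieces, each carrying a single generalized eigenvalue, so the problem reduces to producing a canonical form on each such piece.

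On a piece with real finite eigenvalue $a$ (respectively with eigenvalue at infinity), a Jordan-type argument produces a $B$-cyclic basis; skew-symmetry of $A$ and $B$ then forces their matrices in that basis to take the anti-diagonal forms $(a-\rho)\Delta_g+\Lambda_g$ together with its partner $(a+\rho)\Delta_g-\Lambda_g$ (respectively $\Delta_f-\rho\Lambda_f$), which are exactly the type $\alpha$ (respectively type $\infty$) blocks in the statement. For a nonreal conjugate pair $d\pm ic$ with $c\neq 0$ I would combine the two complex generalized eigenspaces into a single real invariant subspace of doubled dimension and run the same construction realified, which produces precisely the $2\times2$ coefficient blocks $R$ and $S$ appearing in type $\beta$.

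The main obstacle will be the descent from two-sided equivalence to one-sided congruence: the classical Kronecker theorem already produces a canonical form of $A-\rho B$ under independent row and column transformations $P(A-\rho B)Q$, but here one must achieve the reduction with $Q=T^t$ and $P=T$ for a single $T$. Skew-symmetry of $A$ and $B$ is what makes this possible, since it pairs the left and right minimal indices and matches the left partial multiplicities of each eigenvalue with the right ones; keeping track of these pairings across the decomposition is the delicate bookkeeping in the proof. A secondary subtlety is the realification needed for type $\beta$: over $\C$ the two conjugate eigenvalues $d\pm ic$ would give two independent $\alpha$-type blocks, and the constraint $c\neq 0$ is precisely what forces them to package into a single real block rather than split further into real $\alpha$ blocks.
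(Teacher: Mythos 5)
The paper does not prove this statement: it is imported verbatim as Theorem~2 of Thompson \cite{Thompson91} (with the skew-symmetric case going back to Scharlau \cite{Scharlau76}), so there is no internal proof to compare yours against. Your outline does follow the standard route taken in those sources --- Kronecker's singular/regular splitting, $B$-orthogonality of root subspaces for distinct generalized eigenvalues of the $B$-self-adjoint operator, and realification of conjugate pairs $d\pm ic$ into the $\beta$ blocks --- so as a strategy it is the right one.

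Two points remain genuinely open in your sketch. First, the descent from strict equivalence $P(A-\rho B)Q$ to congruence $T(A-\rho B)T^t$ is the actual content of the theorem; you correctly flag it as ``the main obstacle'' but offer no mechanism for producing the single $T$, and this is precisely where Thompson's proof does its work. Second, for a pair of skew-symmetric forms every real or infinite elementary divisor occurs with \emph{even} multiplicity, and it is this pairing (not merely ``skew-symmetry forces an anti-diagonal form on a cyclic basis'') that yields the $2g\times 2g$ blocks of types $\alpha$ and $\infty$; a single Jordan chain cannot carry a nondegenerate skew pairing of the required shape. As written, the proposal is a correct road map but not yet a proof.
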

    
    We can now return to the problem of classification of such 2-step nilpotent MD-algebras. According to Proposition \ref{rankbf}, $\dim\Omega_{F}=\rank \left(F([x_i,x_j])\right)_{n\times n}$ for every $0\neq F:=\lambda x_{n-1}^*+\mu x_n^* \in\Gc^*$. Hence, $\Gc$ is an $\MD{n}{k}$-algebra if and only if $\rank{(\lambda M+\mu N)} = k$ for every $(0,0)\neq (\lambda,\mu)\in\mathbb R^2$. 
    % In particular, when $k=n-2$, we obtain the following result.
    % \begin{lemma}
    %     Let $\Gc$ be a 2-step nilpotent Lie algebra of dimension $n$ such that $\dim\Gc^1=2$. Then $\Gc$ is an $\MD{n}{n-2}$-algebra if and only if every non-zero matrix in the vector space spanned by the associated matrices of $\Gc$ is nonsingular.
    % \end{lemma}
    Moreover, the type $\beta$ is the unique nonsingular type among the types $m,\infty,\alpha,\beta$ in the sense that every non-zero matrix of the type $\beta$ is nonsingular. This proves the following proposition.
    
    \begin{proposition}
        Let $\Gc$ be a 2-step nilpotent $\MD{n}{n-2}$-algebra such that $\dim\Gc^1=2$. Then there is a basis $\mathfrak{b}:=\{x_1, \dots, x_n\}$ of $\Gc$ so that $[x_i,x_{n-1}]=[x_i,x_n]=0$ for every $i$ and the associated pencil of $\Gc$ with respect to $\mathfrak{b}$ is equal to a direct sum of matrices of the form $\beta$ defined in Proposition \ref{pencil}.
    \end{proposition}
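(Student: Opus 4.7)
The plan is to translate the MD hypothesis into the statement that every non-zero element of the pencil spanned by $M$ and $N$ is a nonsingular matrix, and then use Proposition \ref{pencil} together with a rank analysis to eliminate every canonical block type except $\beta$.

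First, because $\Gc$ is 2-step nilpotent we have $\Gc^1 \subseteq C(\Gc)$, so I pick any basis $\{x_{n-1},x_n\}$ of $\Gc^1$ and extend it to a basis $\{x_1,\dots,x_n\}$ of $\Gc$; centrality immediately yields $[x_i,x_{n-1}]=[x_i,x_n]=0$ for every $i$, so the associated matrices $M,N$ from (\ref{def-MN}) are well defined. For any $F\in\Gc^*$, set $\lambda:=F(x_{n-1})$ and $\mu:=F(x_n)$; since $[x_i,x_j]\in\Gc^1$ for all $i,j$, the matrix $\bigl(F([x_i,x_j])\bigr)_{n\times n}$ vanishes outside its leading $(n-2)\times(n-2)$ block, where it equals $\lambda M+\mu N$. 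Combining Proposition \ref{rankbf} with Remark \ref{remark23}, the MD hypothesis is equivalent to
\begin{equation*}
\rank(\lambda M+\mu N)=n-2\quad\text{for every }(\lambda,\mu)\neq(0,0),
\end{equation*}
i.e.\ every non-zero element of $\langle M,N\rangle$ is a nonsingular $(n-2)\times(n-2)$ matrix (so $n-2$ is even).

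Next I apply Proposition \ref{pencil} to produce a real nonsingular $T$ such that $T(M-\rho N)T^t$ is a direct sum of blocks of types $m,\infty,\alpha,\beta$. Performing the corresponding change of basis on $\{x_1,\dots,x_{n-2}\}$ while keeping $x_{n-1},x_n$ preserves the commutation conditions with $\Gc^1$ and replaces $(M,N)$ by $(TMT^t,TNT^t)$, so the nonsingularity condition is unchanged. I then rule out each of the three ``singular'' types. A type-$m$ block of parameter $e$ is a skew-symmetric matrix of odd order $2e+1$, hence singular for every $\rho$, so the whole pencil would be singular. A type-$\infty$ block contributes the off-diagonal $\Lambda_f$ to $N$; since $\rank\Lambda_f=f-1$, the matrix $N$ itself is singular, contradicting the case $(\lambda,\mu)=(0,1)$. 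A type-$\alpha$ block with parameters $(a,g)$ reduces at $\rho=a$ to the off-diagonal $\Lambda_g$ of rank $g-1$, so $M-aN$ is singular, contradicting $(\lambda,\mu)=(1,-a)$. Therefore only type-$\beta$ blocks may appear in the canonical decomposition of $M-\rho N$, which is exactly the desired conclusion.

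The main obstacle is simply the rank bookkeeping for the four canonical types of Proposition \ref{pencil}. Type $\beta$ is the unique survivor because its defining $2\times 2$ sub-block
\begin{equation*}
R=\begin{bmatrix} c & d-\rho \\ d-\rho & -c \end{bmatrix}\qquad (c\neq 0)
\end{equation*}
has determinant $-c^2-(d-\rho)^2$, which is non-zero for every real $\rho$, and its leading $\rho^2$-term keeps the block nonsingular ``at infinity'' as well; this is exactly what is needed so that a direct sum of type-$\beta$ blocks remains nonsingular for every $(\lambda,\mu)\neq 0$.
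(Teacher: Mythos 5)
Your proposal is correct and follows the same route as the paper: translate the MD condition via Proposition \ref{rankbf} into nonsingularity of every non-zero member of the pencil $\langle M,N\rangle$, apply Thompson's canonical form (Proposition \ref{pencil}), and observe that $\beta$ is the only type all of whose non-zero members are nonsingular. The paper merely asserts this last uniqueness claim, whereas you supply the explicit rank computations ruling out types $m$, $\infty$ and $\alpha$; these computations are accurate.
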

    \begin{corollary}
        If $\Gc$ is a 2-step nilpotent $\MD{n}{n-2}$-algebra which has $\dim\Gc^1=2$ then $n-2$ is divisible by $4$.
    \end{corollary}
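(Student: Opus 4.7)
The plan is to read the corollary as a direct size-counting consequence of the preceding proposition: the associated pencil $M - \rho N$ is forced to decompose as a direct sum of $\beta$-blocks, so the only thing left is to check that every $\beta$-block has size divisible by $4$.

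First I would invoke the preceding proposition to choose a basis $\{x_1,\dots,x_n\}$ of $\Gc$ for which $\Gc^1 = \langle x_{n-1},x_n\rangle$ is central and the associated pencil $M - \rho N$, which is a pencil of skew-symmetric $(n-2)\times(n-2)$ matrices, is (strictly) congruent over $\R$ to a direct sum whose summands are all of type $\beta$ in the sense of Proposition \ref{pencil}. Since simultaneous real congruence by a nonsingular $T$ preserves the size of the underlying matrices, $n-2$ equals the sum of the sizes of the $\beta$-summands.

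Next I would unwind the definition of the $\beta$-block from Proposition \ref{pencil}. Each such block has the form
\begin{equation*}
    \beta = \begin{bmatrix} \boldsymbol 0 & \Gamma_h(\rho) \\ -\Gamma_h(\rho) & \boldsymbol 0 \end{bmatrix},
\end{equation*}
where $\Gamma_h(\rho)$ is assembled in an anti-diagonal fashion from the $2\times 2$ blocks $R$ and $S$. A short bookkeeping on the construction of $\Gamma_h(\rho)$ shows that it is of size $2h\times 2h$, and therefore $\beta$ itself has size $4h\times 4h$. In particular, every $\beta$-block contributes a multiple of $4$ to the overall size of the pencil.

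Combining the two steps, $n-2$ is a sum of positive integers of the form $4h_i$, hence is divisible by $4$. There is no genuine obstacle beyond correctly reading the block sizes in Thompson's normal form; the 2-step nilpotent and $\MD{n}{n-2}$ hypotheses do their work entirely through the preceding proposition, which already rules out the singular types $m$, $\infty$, $\alpha$ (those can produce blocks whose size is not a multiple of $4$) and forces only nonsingular $\beta$-blocks to appear.
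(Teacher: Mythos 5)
Your argument is correct and is essentially the paper's own (one-line) proof: the corollary follows immediately from the fact that each type-$\beta$ block, being an off-diagonal pair of copies of $\Gamma$, which is itself assembled from $2\times 2$ blocks $R$ and $S$, has size divisible by $4$. (The only cosmetic discrepancy is indexing: the paper reads $\Gamma_g$ as $g\times g$ with $2\mid g$, so $\beta$ has size $(2g)\times(2g)$, rather than your $2h\times 2h$ and $4h\times 4h$; the conclusion is identical.)
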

    \begin{proof}
        It is straightforward from the fact that the type $\beta$ is of the size $(2g)\times (2g)$ where 2 divides $g$.
    \end{proof}
    % As a consequence, if $\Gc$ is a 2-step nilpotent $\MD{n}{n-2}$-algebra which has $\dim\Gc^1=2$ then $n-2$ is divisible by $4$. This completes the proof of Theorem \ref{theorem62}.

    Now, we will give illustrations for $n=6$ and $n=10$. 
    \begin{itemize}
        \item Let $n=6$. Then there is a basis $\{x_1,x_2,\dots,x_6\}$ of $\Gc_6$ such that $\Gc_6^1=\langle x_5,x_6\rangle$ and  
            \begin{equation*}
                \left(x_5^*([x_i,x_j])\right)_{4\times 4} = \begin{bmatrix}
                    0 & 0 & b & a \\
                    0 & 0 & a & -b\\
                    -b & -a & 0 & 0 \\
                    -a & b & 0 & 0
                \end{bmatrix}, \quad \left( x_6^*([x_i,x_j])\right)_{4\times 4} = \begin{bmatrix}
                    0 & 0 & 0 & -1 \\
                    0 & 0 & -1 & 0\\
                    0 & 1 & 0 & 0 \\
                    1 & 0 & 0 & 0
                \end{bmatrix}
            \end{equation*}
        for some non-zero $b\in\R$.
        By applying the change of basis: 
        \begin{equation*}
            \left\{
                \begin{array}{ll}
                    x_5  \rightarrow bx_5  \\
                    x_6\rightarrow ax_5-x_6,
                \end{array}
            \right.
        \end{equation*}
        we can assume $a=0$ and $b=1$. This Lie algebra is denoted as $\mathfrak{n}_{6,3}$ in \cite{Snob}.
    
        \item 
        Let $n=10$. Then there is a basis $\{x_1,x_2,\dots,x_{10}\}$ of $\Gc$ such that $\Gc^1=\langle x_9,x_{10}\rangle$ and  the associated pencil $M-\rho N:=\left(x_9^*([x_i,x_j])\right)_{8\times 8} - \rho\left(x_{10}^*([x_i,x_j])\right)_{8\times 8}$ is either a direct sum of two $4\times 4$ blocks of the type $\beta$ or just an $8\times 8$ matrix of the type $\beta$. Hence, we have either
            \begin{equation*}
                M -\rho N = \begin{bmatrix}
                    0 & 0 & b_1 & a_1 - \rho\\
                    0 & 0 & a_1 -\rho & -b_1\\
                    -b_1 & -a_1+\rho & 0 & 0 \\
                    -a_1+\rho & b_1 & 0 & 0 \\
                    &&&& 0 & 0 & b_2 & a_2 - \rho \\
                    &&&& 0 & 0 & a_2 -\rho & -b_2\\
                    &&&&-b_2 & -a_2-\rho & 0 & 0 \\
                    &&&& -a_2-\rho & b_2 & 0 & 0
                \end{bmatrix}
            \end{equation*}
            or 
            \begin{equation*}
                M -\rho N = \begin{bmatrix}
                    0 & 0 & 0 & 0 & 0 & 0 & b_1 & a_1 - \rho\\
                    0 & 0 & 0 & 0 & 0 & 0 & a_1 -\rho & -b_1\\
                    0 & 0 & 0 & 0 & b_1 & a_1-\rho & 0 & 1 \\
                    0 & 0 & 0 & 0 & a_1-\rho & -b_1 & 1 & 0 \\
                    0 & 0 & -b_1 & -a_1 - \rho & 0 & 0 & 0 & 0\\
                    0 & 0 & -a_1 -\rho & b_1 & 0 & 0 & 0 & 0\\
                    -b_1 & -a_1-\rho & 0 & -1 & 0 & 0 & 0 & 0\\
                    -a_1-\rho & b_1 & -1 & 0 & 0 & 0 & 0 & 0\\
                \end{bmatrix}
            \end{equation*}
            for some non-zero $b_1,b_2\in\R$. Equivalently, $\Gc$ is isomorphic to one of the following forms:
        \begin{itemize}
            \item[(i)] $\Gc_{10,1}(a_1,b_1,a_2,b_2) := \langle x_1, x_2, \dots,x_{10}\rangle $ with $[x_i,x_9]=[x_i,x_{10}]=0$ for all $i$ and 
            \begin{center}
                \begin{tabular}{|c|c|c|c|c|c|c|c|}
                    \hline
                    & $x_2$ & $x_3$ & $x_4$ & $x_5$ & $x_6$ & $x_7$ & $x_8$\\
                    \hline
                    $x_1$ & 0 & $b_1x_9$ & $a_1x_9-x_{10}$ & 0 & 0 & 0 & 0\\
                    \hline
                    $x_2$ &   & $a_1x_9-x_{10}$ & $-b_1x_9$ & 0 & 0 & 0 & 0\\
                    \hline
                    $x_3$ &   &       & 0     & 0 & 0 & 0 & 0\\
                    \hline
                    $x_4$ &   &       &       & 0 & 0 & 0 & 0\\
                    \hline
                    $x_5$ &   &       &       & & 0 & $b_2x_9$ & $a_2x_9-x_{10}$\\
                    \hline
                    $x_6$ &   &       &       & &  & $a_2x_9-x_{10}$ & $-b_2x_9$\\
                    \hline
                    $x_7$ &   &       &       &  &  &  & 0\\
                    \hline
                \end{tabular}
                \\ $(b_1b_2\neq 0)$
            \end{center}
            
            If so, by the change of basis: 
            \begin{equation*} 
                x_i\leftrightarrow x_{i+4}: i \in \{1,2,3,4\},
            \end{equation*} 
            we easily see that 
            \begin{equation}\label{first}
                \Gc_{10,1}(a_1,b_1,a_2,b_2) \cong \Gc_{10,1}(a_2,b_2,a_1,b_1).
            \end{equation}
            Similarly, by the following change of basis:
            \begin{equation*}
                \left\{\begin{array}{ll}
                    x_{10} & \rightarrow -a_1x_9+x_{10}\\
                    x_9 & \rightarrow b_1x_9,
                \end{array}\right.
            \end{equation*}
            we obtain 
            \begin{equation}\label{second}
                \Gc_{10,1}(a_1,b_1,a_2,b_2) \cong \Gc_{10,1}(0,1,\dfrac{a_2-a_1}{b_1},\dfrac{b_2}{b_1}).
            \end{equation}
            We conclude from the isomorphism (\ref{first}) that we always can assume $0<|b_2|\leq |b_1|$, and from the isomorphism (\ref{second}) that $a_1=0, b_1=1$, i.e., 
            \begin{equation*}
                \Gc_{10,1}(a_1,b_1,a_2,b_2) \cong \Gc_{10,1}(0,1,\mu,\lambda) \quad (0<|\lambda|\leq 1)
            \end{equation*}
            % This is the family $\Gc_{10,1}(\lambda,\mu)$ defined in Theorem \ref{pro52}.
            
            \item[(ii)] $\Gc_{10,2}(a_1,b_1):= \langle x_1, x_2, \dots,x_{10}\rangle $ with $[x_i,x_9]=[x_i,x_{10}]=0$ for all $i$ and 
            \begin{center}
                \begin{tabular}{|c|c|c|c|c|c|c|c|}
                    \hline
                    & $x_2$ & $x_3$ & $x_4$ & $x_5$ & $x_6$ & $x_7$ & $x_8$\\
                    \hline
                    $x_1$ & 0 & 0 & 0 & 0 & 0 & $b_1x_9$ & $a_1x_9-x_{10}$\\
                    \hline
                    $x_2$ &   & 0 & 0 & 0 & 0 & $a_1x_9-x_{10}$ & $-b_1x_9$\\
                    \hline
                    $x_3$ &   &       & 0   & $b_1x_9$ & $a_1x_9-x_{10}$ & 0 & $x_9$\\
                    \hline
                    $x_4$ &   &       &       & $a_1x_9-x_{10}$ & $-b_1x_9$ & $x_9$ & 0\\
                    \hline
                    $x_5$ &   &       &       & & 0 & 0 & 0\\
                    \hline
                    $x_6$ &   &       &       & &  & 0 & 0\\
                    \hline
                    $x_7$ &   &       &       &  &  &  & 0\\
                    \hline
                \end{tabular}
                \\ $(b_1\neq 0)$
            \end{center}
        \end{itemize}
        By the change of basis: $x_{10} \rightarrow a_1x_9-x_{10}$, we easily see that
        \begin{equation*}
            \Gc_{10,2}(a_1,b_1) \cong \Gc_{10,2}(0,\lambda) \quad (\lambda \neq 0).
        \end{equation*}
    \end{itemize}
    
    In summary, we have proven the following theorem.
    
    \begin{theorem}\label{theorem62}
        Let $\Gc$ be an $\MD{n}{n-2}$-algebra of dimension $n\geq 6$ with $\dim\Gc^1\leq 2$. 
        \begin{enumerate}
            \item If $\Gc$ is not 2-step nilpotent, i.e. $[\Gc,\Gc^1]\neq 0$, then $\Gc$ is either isomorphic to $\R^2\oplus\textnormal{aff}(\C)$ or isomorphic to 
            $\R\oplus \mathfrak{h}_{2m+1}$ where $2m=n-2$.
            \item If $\Gc$ is a 2-step nilpotent Lie algebra then $n=4k+2$ for some $k\in\mathbb N$, and the associated pencil of $\Gc$ is a direct sum of type $\beta$.
            \item If $n=6$ then $\Gc$ is isomorphic to $\mathfrak{n}_{6,1}$ defined in \cite{Snob}.
            \item If $n=10$ then $\Gc$ is isomorphic to one of the following families: 
            $\Gc_{10,1}(0,1,\mu,\lambda) \ (0<|\lambda|\leq 1)$ and $\Gc_{10,2}(0,\lambda) \ (\lambda \neq 0)$.
        \end{enumerate}
    \end{theorem}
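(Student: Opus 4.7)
The plan is to assemble the reductions already set up earlier in this section into the four assertions, organized by whether $\Gc$ is 2-step nilpotent or not.

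\textbf{Part (1).} For the non-nilpotent alternative I would invoke Corollary \ref{lemma-non2step} directly. If $\dim\Gc^1 = 1$, the classification of Lie algebras with $1$-dimensional derived ideal gives that $\Gc$ is a trivial extension of $\textnormal{aff}(\R)$ or of some $\mathfrak{h}_{2m+1}$; the first candidate is ruled out because its nontrivial coadjoint orbits have dimension $2$, whereas for $n\geq 6$ we need codimension $2$ (i.e.\ dimension $n-2\geq 4$), forcing $\Gc \cong \R\oplus \mathfrak{h}_{n-1}$. If $\dim\Gc^1 = 2$ with $\Gc^1 \not\subseteq C(\Gc)$, the second bullet of Corollary \ref{lemma-non2step} gives $\Gc \cong \R^2\oplus \textnormal{aff}(\C)$.

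\textbf{Part (2).} Assuming $\Gc$ is 2-step nilpotent with $\dim\Gc^1 = 2$, I would apply the Levstein--Tiraboschi correspondence to encode $\Gc$ by its associated pencil $M - \rho N$ of real skew-symmetric $(n-2)\times (n-2)$ matrices, up to strict congruence. Thompson's Proposition \ref{pencil} decomposes such a pencil into blocks of types $m, \infty, \alpha, \beta$. Proposition \ref{rankbf} translates the MD-condition into the requirement $\rank(\lambda M + \mu N) = n-2$ for every nonzero $(\lambda,\mu) \in \R^2$. A block-by-block inspection shows that types $m, \infty, \alpha$ each become singular for some specialization of $\rho$, whereas a type-$\beta$ block with $c\neq 0$ stays nonsingular for every nonzero $(\lambda,\mu)$. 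Hence only $\beta$-blocks survive, and since each such block has size $2g\times 2g$ with $g$ even, the dimension $n-2$ is divisible by $4$, giving $n = 4k+2$.

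\textbf{Parts (3) and (4).} These are the explicit enumeration of partitions of $n-2$ into $\beta$-blocks. For $n=6$ the pencil is a single $4\times 4$ $\beta$-block with parameters $(a,b)$, $b\neq 0$; the displayed rescaling in $\langle x_5,x_6\rangle$ normalizes $(a,b)$ to $(0,1)$, identifying $\Gc$ with $\mathfrak{n}_{6,1}$. For $n=10$ the $8\times 8$ pencil admits exactly two partitions into $\beta$-blocks: two $4\times 4$ blocks (producing the family $\Gc_{10,1}$) or one $8\times 8$ block (producing $\Gc_{10,2}$). The basis swap $x_i \leftrightarrow x_{i+4}$ together with the rescaling of $(x_9, x_{10})$ reduces the parameters in the first subcase to $(0,1,\mu,\lambda)$ with $0<|\lambda|\leq 1$, and the change $x_{10} \rightarrow a_1 x_9 - x_{10}$ reduces the second subcase to $(0,\lambda)$ with $\lambda\neq 0$.

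The main obstacle is the filtering step in part (2): verifying that among Thompson's four block types only $\beta$ is compatible with the MD-condition for every specialization of $\rho$, and that the residual group of basis changes preserving the $\beta$-block decomposition is exactly rich enough to reach the canonical families of parts (3) and (4). Everything else reduces to explicit linear algebra of the kind already performed in the section.
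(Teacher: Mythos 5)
Your proposal follows essentially the same route as the paper: part (1) via Corollary \ref{lemma-non2step}, part (2) via the Levstein--Tiraboschi correspondence together with Thompson's pencil classification and the observation that only type $\beta$ is everywhere nonsingular, and parts (3)--(4) by the same block enumeration and normalizing basis changes. The only refinement worth noting is that you correctly restrict part (2) to the case $\dim\Gc^1=2$ (where the pencil framework applies), which is slightly more careful than the theorem's own phrasing.
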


\section[Two-step solvable MD{n}{n-2}-algebras]{Two-step solvable $\boldsymbol{\MD{n}{n-2}}$-algebras}\label{section-5}
    Finally, to complete the classification of $\MD{n}{n-2}$-algebras, we only need to classify 2-step solvable $\MD{n}{n-2}$-algebras. Surprising, such a Lie algebra is decomposable and has dimension exactly 6. 
    \begin{theorem}\label{main-noncommutative}
        Let $\Gc$ be a 2-step solvable real Lie-algebra whose non-trivial coadjoint orbits are all of codimension 2. Then $\Gc$ is isomorphic to $\R\oplus \mathfrak{s}_{5,45}$. 
    \end{theorem}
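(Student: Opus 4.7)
The plan is to analyze the structure of the quotient $\Gc/\Gc^2$, to which the previously established classifications of $1$-step solvable MD-algebras apply, and then reconstruct $\Gc$ by Jacobi identities. By \cite[Theorem 3.5]{HHV21}, $\Gc/\Gc^2$ is a $1$-step solvable MD-algebra whose non-trivial coadjoint orbits still have dimension $n-2$. Since $\dim(\Gc/\Gc^2) = n - \dim\Gc^2$, the codimension $2 - \dim\Gc^2$ must be non-negative, forcing $\dim\Gc^2 \in \{1,2\}$. If $\dim\Gc^2 = 2$, the quotient has an open orbit, and Proposition~\ref{sonviet} combined with $n \geq 6$ pins it down to $\Gc/\Gc^2 \cong \textnormal{aff}(\mathbb{C})$ with $n = 6$. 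If $\dim\Gc^2 = 1$, the quotient has codimension-$1$ orbits, and by Proposition~\ref{pro210}---after discarding the $2$-step solvable $\mathfrak{s}_{5,45}$---the quotient is either $\mathbb{R} \oplus \textnormal{aff}(\mathbb{C})$ with $n = 6$, or a Heisenberg algebra $\mathfrak{h}_{n-1}$ with $n$ even.

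The key technical tool in the $\dim\Gc^2 = 1$ setting is the observation that, writing $\Gc^2 = \langle z\rangle$, the scalar character $\mu : \Gc \to \mathbb{R}$ defined by $[x,z] = \mu(x) z$ is a Lie algebra homomorphism: Jacobi gives $\mu([u,v])\,z = [u,[v,z]] - [v,[u,z]] = \mu(v)\mu(u)z - \mu(u)\mu(v)z = 0$. Hence $\mu$ vanishes on $\Gc^1$, i.e.\ $[\Gc^1, \Gc^2] = 0$. This kills the Heisenberg quotient immediately: a lift $c$ of the generator of $\mathfrak{h}_{n-1}^1$ spans $\Gc^1 = \langle c, z\rangle$, so $\Gc^2 = [\Gc^1,\Gc^1] = \langle [c,z]\rangle = 0$, contradicting $\dim\Gc^2 = 1$. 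For the $\dim\Gc^2 = 2$ case I instead analyze the induced representation $\textnormal{ad}^2 : \textnormal{aff}(\mathbb{C}) \to \mathfrak{gl}(\Gc^2) \cong \mathfrak{gl}_2(\mathbb{R})$. The relations $[x_1, y_i] = y_i$ and $[y_1,y_2] = 0$ in $\textnormal{aff}(\mathbb{C})$ force $\textnormal{ad}^2(y_1)$ and $\textnormal{ad}^2(y_2)$ to commute and to both lie in the $1$-eigenspace of the inner derivation by $\textnormal{ad}^2(x_1)$. A case analysis on the Jordan type of $\textnormal{ad}^2(x_1)$ shows this eigenspace has real dimension at most one, and the rotating action of $\textnormal{ad}^2(x_2)$ then forces $\textnormal{ad}^2(y_1) = \textnormal{ad}^2(y_2) = 0$. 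Consequently $\Gc^2 = [\Gc^1,\Gc^1]$ collapses to $\langle[y_1,y_2]\rangle$, which is at most one-dimensional, contradiction.

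Only the case $n = 6$, $\dim\Gc^2 = 1$, $\Gc/\Gc^2 \cong \mathbb{R} \oplus \textnormal{aff}(\mathbb{C})$ survives. Lifting a standard basis $\{x_0, x_1, x_2, y_1, y_2\}$ of the quotient and rescaling so that $[y_1,y_2] = z$ (generator of $\Gc^2$), the key step is to apply Jacobi to $(y_1, y_2, x_i)$: using $[y_i, z] = 0$ from the lemma, this pins down $\mu(x_0) = 0$, $\mu(x_1) = 2$, $\mu(x_2) = 0$. Further Jacobi identities on $(x_i, x_j, y_k)$ and $(x_0, x_1, x_2)$ constrain the remaining freedom to a handful of scalars (such as $[x_0, y_i] = 0$, a symmetry and trace-zero condition on the $z$-components of $[x_\ell, y_k]$, and $[x_0, x_2] = 0$). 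Elementary basis changes of the form $y_i \mapsto y_i + (\cdot) z$, $x_0 \mapsto x_0 + (\cdot) z$, and $x_2 \mapsto x_2 + (\cdot) z$, followed by the sign change $x_2 \mapsto -x_2$, absorb every remaining parameter, exhibiting $x_0$ as a central summand and the complementary $5$-dimensional ideal as $\mathfrak{s}_{5,45}$. The main obstacle lies in the $\dim\Gc^2 = 2$ case: ruling out every $2$-dimensional real representation of $\textnormal{aff}(\mathbb{C})$ compatible with the derived-subalgebra condition $\Gc^2 = [\Gc^1,\Gc^1]$ requires the full Jordan-type analysis sketched above, whereas both the Heisenberg elimination and the reconstruction step follow cleanly from the scalar-character observation.
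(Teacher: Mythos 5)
Your proposal is correct and follows the same overall architecture as the paper's proof: split on $\dim\Gc^2\in\{1,2\}$ via the quotient $\Gc/\Gc^2$ and the classifications in Propositions \ref{sonviet} and \ref{pro210}, rule out $\dim\Gc^2=2$ by showing $\adgg{y_1}=\adgg{y_2}=\boldsymbol 0$ so that $\Gc^2=\langle[y_1,y_2]\rangle$ is at most one-dimensional, rule out the Heisenberg quotient, and reconstruct $\R\oplus\mathfrak{s}_{5,45}$ from $\R\oplus\textnormal{aff}(\C)$ by Jacobi identities and basis changes. Where you differ is in the two elimination sub-arguments, and both of your replacements are sound and arguably cleaner. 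For $\dim\Gc^2=1$ you use the character $\mu$ with $[x,z]=\mu(x)z$ and the observation that $\mu$ kills $\Gc^1$; this is equivalent to the paper's identity $\textnormal{trace}(\adgg{x})=0$ for $x\in\Gc^1$ specialized to a one-dimensional $\Gc^2$, but it also disposes of the Heisenberg quotient by a direct computation rather than the paper's terser appeal to nilpotency of $\Gc^1$. For $\dim\Gc^2=2$ the paper reaches linear dependence of $\adgg{y_1},\adgg{y_2}$ via simultaneous block-triangularization of commuting matrices (Proposition \ref{triangular}) plus the trace-zero condition, whereas you get it from the relation $[\adgg{x_1},\adgg{y_i}]=\adgg{y_i}$, which places both operators in the $1$-eigenspace of $\textnormal{ad}(\adgg{x_1})$ on $\mathfrak{gl}_2(\R)$ — a space of real dimension at most one since the eigenvalues of that inner derivation are $0,0,\alpha-\beta,\beta-\alpha$; this avoids the triangularization machinery entirely. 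The endgame is identical in both treatments: the rotation relations $[\adgg{x_2},\adgg{y_1}]=\adgg{y_2}$, $[\adgg{x_2},\adgg{y_2}]=-\adgg{y_1}$ combined with linear dependence force $(c^2+1)\adgg{y_1}=\boldsymbol 0$, hence both operators vanish. One small point common to both your write-up and the paper: the low-dimensional cases $n=4$ (where Proposition \ref{pro210} does not apply) should be noted as vacuous, since by Remark \ref{remark213} no $\MD{4}{2}$-algebra is $2$-step solvable.
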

    
    \begin{proof}
    Recall that for every $x,y,z\in\Gc$, we have:
    \begin{equation*}
        \left[[x,y],z\right] = \left[x,[y,z]\right]-\left[y,[x,z]\right].
    \end{equation*}
    It follows that 
    \begin{equation*}
        \ad{x}\ad{y}-\ad{y}\ad{x}=\ad{[x,y]}.
    \end{equation*}
    Hence, for every $x\in \Gc^1$, we have 
        \begin{equation}\label{commutative}
            \text{trace}(\ad{x})=\text{trace}(\adg{x})=\text{trace}(\adgg{x})=0 .
        \end{equation}
    According to Theorem 3.5 in \cite{HHV21}, $1\leq \dim\Gc^2\leq 2$. Therefore, we will divide the proof into two cases:
    \begin{itemize}
        \item \textbf{Case 1: $\boldsymbol{\dim\Gc^2=2}$.} If so, $\Hc:=\Gc/\Gc^2$ is a 1-step solvable Lie algebra whose non-trivial coadjoint orbits are all of the same dimension as $\Hc$ \cite[Theorem 3.5]{HHV21}. In the other words, $\mathcal{H}$ is an $\MD{n}{n}$-algebra. According to Proposition \ref{sonviet}, $\Hc$ is isomorphic to either $\textnormal{aff}(\R)$ or $\textnormal{aff}(\C)$. Since $\dim\Gc\geq 6$, $\Hc \cong \textnormal{aff}(\C)$. It implies the existence of a basis $\mathfrak{b}:=\{x_1,x_2,y_1,y_2,z_1,z_2\}$ of $\Gc$ such that: 
        \begin{align*}
            \Gc^1= & \langle y_1,y_2,z_1,z_2\rangle, \quad \Gc^2=  \langle z_1,z_2 \rangle\\
            \Hc = & \langle \overline{x_1}, \overline{x_2},\overline{y_1},\overline{y_2}\rangle \cong \text{aff}(\C),
        \end{align*}
        where 
        \begin{align*}
            [\overline{x_1},\overline{y_1}]=\overline{y_1}, [\overline{x_1},\overline{y_2}]=\overline{y_2} \text{ and } [\overline{x_2},\overline{y_1}]=\overline{y_2}, [\overline{x_2},\overline{y_2}]=-\overline{y_1}.
        \end{align*}
        Since $\Gc^1$ and $\Gc^2$ are both ideals of $\Gc$, the Lie brackets in $\Gc$ can be determined as follows:
        \begin{equation*}
            \begin{array}{|c|c|c|c|c|c|c|}
                \hline
                     & x_1 & x_2 & y_1 & y_2 & z_1 & z_2\\
                \hline
                x_1  & 0 & \lambda_1z_1+\lambda_2z_2 & 
                y_1+\lambda_3z_1+\lambda_4z_2 & y_2+\lambda_5z_1+\lambda_6z_2 & \lambda_7 z_1+\lambda_8z_2 & \lambda_9z_1+\lambda_{10}z_2\\ 
                \hline
                x_2 &  & 0 & y_2+\lambda_{11}z_1+\lambda_{12}z_2 & -y_1+\lambda_{13}z_1+\lambda_{14}z_2 & \lambda_{15} z_1+\lambda_{16}z_2 & \lambda_{17}z_1+\lambda_{18}z_2\\
                \hline
                y_1 &  &  & 0 & \lambda_{19}z_1+\lambda_{20}z_2 & \lambda_{21} z_1+\lambda_{22}z_2 & \lambda_{23}z_1+\lambda_{24}z_2\\
                \hline
                y_2 & &  &  & 0 & \lambda_{25} z_1+\lambda_{26}z_2 & \lambda_{27}z_1+\lambda_{28}z_2\\
                \hline
                z_1 &  &  &  &  & 0  & 0\\
                \hline
            \end{array}
        \end{equation*}
    Since $\Gc^2$ is commutative, we can obtain directly from the Jacobi identity that $\adgg{y_1}\adgg{y_2}=\adgg{y_2}\adgg{y_1}$. By Proposition \ref{triangular}, we can assume that $[\adgg{y_1}]_{\mathfrak{b}}$ and $[\adgg{y_2}]_{\mathfrak{b}}$ are both either of the diagonal form or of the form $aI+bJ$. 
    Without loss of generality, we can assume that
    \begin{equation*}
        \text{either} \quad 
        \left\{
            \begin{array}{ll}
                [\adgg{y_1}]_\mathfrak{b} = 
                \begin{bmatrix}
                    a & 0 \\ 0 & b
                \end{bmatrix} \\
                {}[\adgg{y_2}]_\mathfrak{b} = 
                \begin{bmatrix}
                    c & 0 \\ 0 & d
                \end{bmatrix}
            \end{array}
        \right. \quad \text{or} \quad 
        \left\{
            \begin{array}{ll}
                [\adgg{y_1}]_\mathfrak{b} = \begin{bmatrix}
                    a & b \\ -b & a
                \end{bmatrix} \\
                {}[\adgg{y_2}]_\mathfrak{b} = \begin{bmatrix}
                    c & d \\ -d & c
                \end{bmatrix}.
            \end{array}
        \right.
    \end{equation*}
    
    Moreover, it follows from the equation (\ref{commutative}) that
    \begin{equation*}
        \textnormal{trace}(\adgg{y_1})=\textnormal{trace}(\adgg{y_2}) =0.
    \end{equation*}
    It turns out that 
    \begin{equation*}
        \text{either} \quad 
        \left\{
            \begin{array}{ll}
                [\adgg{y_1}]_\mathfrak{b} = 
                \begin{bmatrix}
                    a & 0 \\ 0 & -a
                \end{bmatrix} \\
                {}[\adgg{y_2}]_\mathfrak{b} = 
                \begin{bmatrix}
                    c & 0 \\ 0 & -c
                \end{bmatrix}
            \end{array}
        \right. \quad \text{or} \quad 
        \left\{
            \begin{array}{ll}
                [\adgg{y_1}]_\mathfrak{b} = \begin{bmatrix}
                    0 & b \\ -b & 0
                \end{bmatrix} \\
                {}[\adgg{y_2}]_\mathfrak{b} = \begin{bmatrix}
                    0 & d \\ -d & 0
                \end{bmatrix}.
            \end{array}
        \right.
    \end{equation*}
    In both cases, there is $(0,0) \neq (\lambda,\mu)\in\mathbb{R}^2$ so that $\lambda \adgg{y_1}+\mu\adgg{y_2}=\boldsymbol 0$. Now, by applying the Jacobi identity  to $(x_2,\lambda y_1+\mu y_2,z)$ for any $z\in\Gc^2$, we easily see that
    \begin{equation*}
        \boldsymbol 0=\adgg{x_2}\adgg{\lambda y_1+\mu y_2}-\adgg{\lambda y_1+\mu y_2}\adgg{x_2}=\adgg{[x_2,\lambda y_1+\mu y_2]}= -\mu\adgg{y_1}+\lambda \adgg{y_2}.
    \end{equation*}
    Therefore, 
    \begin{equation*}
            \lambda \adgg{y_1}+\mu\adgg{y_2} =
             -\mu\adgg{y_1}+\lambda \adgg{y_2}=\boldsymbol 0.
    \end{equation*}
    This clearly forces $\adgg{y_1}=\adgg{y_2}=\boldsymbol 0$, and consequently $\Gc^2$ is spanned by $\{[y_1,y_2]\}$, a contradiction to $\dim\Gc^2=2$. Hence, this case is excluded.

        \item \textbf{Case 2: $\boldsymbol{\dim\Gc^2=1}$.} If so, $\Hc:=\Gc/\Gc^2$ is a 1-step solvable Lie-algebra whose non-zero coadjoint orbits are of codimension 1. It follows from Proposition \ref{pro210} that $\Hc$ is isomorphic to one of the followings: $\mathfrak{h}_{2m+1}$, $\R\oplus \textnormal{aff}(\C)$. Furthermore, if $\mathcal{H}\cong \mathfrak{h}_{2m+1}$ then $\dim\Gc^1=2$ and $\dim\Gc^2=1$. This is impossible because $\Gc^1$ is nilpotent.
        %according to Theorem \ref{theorem62}, there is no 2-step solvable $\MD{n}{n-2}$-algebra $\Gc$ which has $\dim\Gc^1=2$ (Actually, for any Lie algebra $\Gc$, . 
        Hence, $\mathcal{H}\cong \R\oplus \textnormal{aff}(\C)$. 
        
        Equivalently, we can fix a basis $\{x_1,x_2,y_1,y_2,y_3,z\}$ of $\Gc$ so that 
        \begin{equation*}
            \left\{
                \begin{array}{ll}
                    \Gc^1= & \langle y_1,y_2,y_3\rangle, \quad \Gc^2=  \langle z \rangle, \\
                    \Hc = & \langle\overline{y_3}\rangle\oplus\langle \overline{x_1}, \overline{x_2},\overline{y_1},\overline{y_2}\rangle 
                        \end{array}
            \right.
        \end{equation*}
        where the Lie brackets in $\Hc$ are the same as those in $\R\oplus \textnormal{aff}(\C)$, i.e. 
        \begin{align*}
            [\overline{x_1},\overline{y_1}]=\overline{y_1}, [\overline{x_1},\overline{y_2}]=\overline{y_2} \text{ and } [\overline{x_2},\overline{y_1}]=\overline{y_2}, [\overline{x_2},\overline{y_2}]=-\overline{y_1}.
        \end{align*}
        It implies that the Lie brackets in $\Gc$ must have the form
        \[\begin{array}{|c|c|c|c|c|c|c|c|}
                \hline
                    & x_1 & x_2 & y_1 & y_2 & y_3 & z\\
                \hline
                x_1 &  & \lambda_1z & y_1+\lambda_2z & y_2+\lambda_3z & \lambda_4 z & \lambda_5z\\ 
                \hline
                x_2 &  &  &  y_2+\lambda_6z & -y_1+\lambda_7z & \lambda_8 z & \lambda_9z\\
                \hline
                y_1 & &  &  & \lambda_{10}z & \lambda_{11} z & \lambda_{12}z\\
                \hline
                y_2 & &  &  &  & \lambda_{13} z & \lambda_{14}z\\
                \hline
                y_3   & &  &  &  &  & \lambda_{15}z\\
                \hline
        \end{array}\]
        
        If so, it follows from the equation (\ref{commutative}) that  
        \begin{equation*}
            \lambda_{12}=\lambda_{14}=0.
        \end{equation*}
        This means $[y_1,z]=[y_2,z]=0$. Because $\Gc^2\neq \{0\}$, we must have $\lambda_{10}\neq 0$. By basis changing $z\rightarrow \dfrac{1}{\lambda_{10}}z$, we may assume $\lambda_{10}=1$. 
        
        Now, by checking the Jacobi identity to the following triples
        $(x_1,y_1,y_2)$; $(x_2,y_1,y_2)$; $(y_1,y_2,y_3)$; $(x_1,x_2,y_3)$; $(x_1,y_1,y_3)$; and $(x_1,y_2,y_3)$; we obtain
        \begin{equation*}
            \lambda_{5}=2, \lambda_{9}=\lambda_{15}=2\lambda_8+\lambda_1\lambda_{15}=\lambda_{11}+\lambda_2\lambda_{15}=\lambda_{13}+\lambda_3\lambda_{15}=0
        \end{equation*}
        
        Hence, 
        \begin{equation*}
            \lambda_5=2, \lambda_{8}=\lambda_9=\lambda_{11}=\lambda_{12}=\lambda_{13}=\lambda_{14}=\lambda_{15}=0.
        \end{equation*}
        By basis changing $y_3\rightarrow y_3-\dfrac{\lambda_4}{2}z$ if necessary, we get $\Gc$ decomposable. In the other words, $\Gc$ is isomorphic to a direct sum of $\R$ with a Lie algebra $\Gc'$. Since $\Gc$ is 2-step solvable, so is $\Gc'$. Furthermore, non-zero coadjoint orbits of $\Gc'$ and $\Gc$ have the same dimension \cite[Theorem 3.1]{HHV21}. In the other words, $\Gc'$ is a 2-step solvable MD-algebra whose non-trivial coadjoint orbits are all of codimension 1. According to Proposition \ref{pro210}, $\Gc'$ must be isomorphic to $\mathfrak{s}_{5,45}$. Equivalently, $\Gc$ is isomorphic to $\mathbb{R}\oplus\mathfrak{s}_{5,45}$. This completes the proof.  
    \end{itemize}
\end{proof}

\section{Concluding Remarks}
In summary, the paper has introduced the  classification of $\MD{n}{n-2}$-class with $2 \leq n \in \mathbb{N}$. There are 14 different $\MD{n}{n-2}$-algebras (up to an isomorphism) of dimension $n < 5$ listed in Table \ref{table:1}. The subclass of all 2-step nilpotent $\MD{n}{n-2}$-algebras with $n \geq 6$ is classified by canonical forms of associated pencils of matrices,
%a pair of the associated matrices of pencils (up to a strict congruence), 
in which algebras of dimension $n \leq 10$ are listed in Table \ref{table:3}. The remaining subclass of $\MD{n}{n-2}$-algebras is classified (up to an isomorphism) and listed in Table \ref{table:2}. In the following tables, $\{x_1,x_2, \dots, x_n \}$ is used to denote a basis of corresponding $\MD{n}{n-2}$-algebra $\Gc$.

\begin{table}[!ht]
\centering
{\small
\caption{List of all $\MD{n}{n-2}$-algebras with $n=2,4$.}
\label{table:1}
 \begin{tabular}{l l l l} 
 \hline
 $n$ & Algebras & Non-trivial Lie brackets & Notes \\ 
 \hline
 2 & $\R^2$ & - & \\
 \hline
 4 & $\mathfrak{n}_{4,1}$ & $[x_2,x_4]=x_1, [x_3,x_4]=x_2$ & \\
 & $\mathfrak{s}_{4,1}$ & $[x_4,x_2]=x_1, [x_4,x_3]=x_3$ & \\
 & $\mathfrak{s}_{4,2}$ & $[x_4,x_1]=x_1, [x_4,x_2]=x_1+x_2, [x_4,x_3]=x_2+x_3$ & \\
 & $\mathfrak{s}_{4,3}$ & $[x_4,x_1]=x_1, [x_4,x_2]=\alpha x_2, [x_4,x_3]=\beta x_3$ & $0<|\beta|\leq |\alpha| \leq 1$, $(\alpha,\beta) \neq (-1,-1)$ \\
 & $\mathfrak{s}_{4,4}$ & $[x_4,x_1]=x_1, [x_4,x_2]=x_1+x_2, [x_4,x_3]=\alpha x_3$ & $\alpha \neq 0$ \\
 & $\mathfrak{s}_{4,5}$ & $[x_4,x_1]=\alpha x_1, [x_4,x_2]=\beta x_2-x_3, [x_4,x_3]=x_2+\beta x_3$ & $\alpha > 0$ \\
 & $\mathfrak{s}_{4,6}$ & $[x_4,x_2]=x_2, [x_4,x_3]=-x_3$ & \\
 & $\mathfrak{s}_{4,7}$ & $[x_4,x_2]=-x_3, [x_4,x_3]=x_2$ & \\
 & $\textnormal{aff}(\R) \oplus \R^2$ &$[x_1,x_2]=x_2$ & \\
 & $\mathfrak{n}_{3,1} \oplus \R$ &$[x_2,x_3]=x_1$ & \\
 & $\mathfrak{s}_{3,1} \oplus \R$ &$[x_3,x_1]=x_1, [x_3,x_2]=\alpha x_2$  & $0< |\alpha| \leq 1$\\
 & $\mathfrak{s}_{3,2} \oplus \R$ &$[x_3,x_1]=x_1, [x_3,x_2]=x_1+x_2$  & \\
 & $\mathfrak{s}_{3,3} \oplus \R$ &$[x_3,x_1]=\alpha x_1-x_2, [x_3,x_2]=x_1+\alpha x_2$  & $\alpha \geq 0$ \\
 \hline
 \end{tabular}  }
\end{table}

 \begin{table}[!ht]
\centering
{\small
\caption{List of all $\MD{n}{n-2}$-algebras with $n \geq 6$ which are not 2-step nilpotent.}
\label{table:2}
 \begin{tabular}{l l l l} 
 \hline
 $\dim \Gc^1$ & Algebras & Non-trivial Lie brackets & Notes \\ 
 \hline
  1& $ \mathfrak{h}_{2m+1} \oplus \R$ & $[x_i,x_{m+i}]=x_{2m+1}$ $\forall i=1,\dots,m$ & $2m=n-2$ \\
 \hline
  2& $ \textnormal{aff}(\C) \oplus \R^2$ & $[x_3,x_1]=-x_2, [x_3,x_2]=[x_4,x_1]=x_1, [x_4,x_2]=x_2$ & \\ \hline
 $\geq 3$ & $\mathfrak{s}_{6,211}$ & 
                            \begin{tabular}{|c|c|c|c|c|}
                            \hline
                                 $[\cdot,\cdot]$ & $x_3$ & $x_4$ & $x_5$ & $x_6$\\
                            \hline
                            $x_1$ & $x_3$ & $x_4$ & $x_5+x_3$ & $x_6+x_4$\\
                            \hline
                            $x_2$ & $-x_4$ & $x_3$ & $-x_6$ & $x_5$ \\
                            \hline
                            \end{tabular}
                            & \\
 & $\mathfrak{s}_{6,225}(\nu,\theta)$ &
                            \begin{tabular}{|c|c|c|c|c|}
                            \hline
                                 $[\cdot,\cdot]$ & $x_3$ & $x_4$ & $x_5$ & $x_6$\\
                            \hline
                            $x_1$ & $x_3$ & $x_4$ & $x_5+\nu x_3-\theta x_4$ & $x_6+\theta x_3+\nu x_4$\\
                            \hline
                            $x_2$ & $-x_4$ & $x_3$ & $-x_6+x_3$ & $x_5$ \\
                            \hline
                            \end{tabular}
                            & $\nu\geq 0$ \\
 & $\mathfrak{s}_{6,226}(\lambda,\mu,\zeta)$ & 
                            \begin{tabular}{|c|c|c|c|c|}
                            \hline
                                 $[\cdot,\cdot]$ & $x_3$ & $x_4$ & $x_5$ & $x_6$\\
                            \hline
                            $x_1$ & $\lambda x_3$ & $\lambda x_4$ & $x_5$ & $x_6$\\
                            \hline
                            $x_2$ & $\mu x_3-\zeta x_4$ & $\zeta x_3+\mu x_4$ & $-x_6$ & $x_5$\\
                            \hline
                            \end{tabular} 
                            & $\begin{cases}
                                \lambda \neq 0,\mu\geq 0, 0<\zeta \leq 1 \\
                                \mbox{if } \zeta=1 \mbox{ then } |\lambda| \leq 1
                            \end{cases} $
                            \\
 & $\mathfrak{s}_{6,228}(\lambda,\mu,\eta,\zeta)$ & 
                            \begin{tabular}{|c|c|c|c|c|}
                            \hline
                                 $[\cdot,\cdot]$ & $x_3$ & $x_4$ & $x_5$ & $x_6$\\
                            \hline
                            $x_1$ & $\lambda x_3-\eta x_4$ & $\eta x_3+\lambda x_4$ & $x_5$ & $x_6$\\
                            \hline
                            $x_2$ & $\mu x_3-\zeta x_4$ & $\zeta x_3+\mu x_4$ & $-x_6$ & $x_5$ \\
                            \hline
                            \end{tabular} & $\lambda\zeta-\mu\eta>0, \mu\geq 0$
                            \\
 & $\mathfrak{s}_{5,45} \oplus \R$ & 
                            \begin{tabular}{|c|c|c|c|}
                            \hline
                                 $[\cdot,\cdot]$ & $x_1$ & $x_2$ & $x_3$\\
                            \hline
                            $x_2$ & 0 & 0 & $x_1$\\
                            \hline
                            $x_4$ & $2x_1$ & $x_2$ & $x_3$\\
                            \hline
                            $x_5$ & 0 & $x_3$ & $-x_2$ \\
                            \hline
                            \end{tabular} 
                            & \\ \hline
 \end{tabular}  }
\end{table}

 \begin{table}[!ht]
\centering
{\small
\caption{List of all 2-step nilpotent $\MD{n}{n-2}$-algebras with $6 \leq n \leq 10$.}
\label{table:3}
 \begin{tabular}{l l l l} 
 \hline
 $n$ & Algebras & Non-trivial Lie brackets & Notes \\ 
 \hline
 6 & $\mathfrak{n}_{6,1}$ & $[x_4,x_5]=x_2, [x_4,x_6]=x_3, [x_5,x_6]=x_1$ & \\
 \hline
 8 & There is no $\MD{8}{6}$-algebra \\
 \hline
 10 & $\Gc_{10,1}(0,1,\mu,\lambda)$ & 
                            \begin{tabular}{|c|c|c|c|c|}
                            \hline
                                 $[\cdot,\cdot]$ & $x_3$ & $x_4$ & $x_7$ & $x_8$\\
                            \hline
                            $x_1$ & $x_9$ & $-x_{10}$ & 0 & 0\\
                            \hline
                            $x_2$ & $-x_{10}$ & $-x_9$ & 0 & 0 \\
                            \hline
                             $x_5$ & 0 & 0 & $\lambda x_9$ & $\mu x_9-x_{10}$ \\
                            \hline
                            $x_6$ & 0 & 0 & $\mu x_9-x_{10}$ & $-\lambda x_9$ \\
                            \hline
                            \end{tabular}
                            & $0<|\lambda|\leq 1$\\
 & $\Gc_{10,2}(0,\lambda)$  & 
                            \begin{tabular}{|c|c|c|c|c|}
                            \hline
                                 $[\cdot,\cdot]$ & $x_5$ & $x_6$ & $x_7$ & $x_8$\\
                            \hline
                            $x_1$ & 0 & 0 & $\lambda x_9$ & $-x_{10}$\\
                            \hline
                            $x_2$ & 0 & 0 & $-x_{10}$ & $-\lambda x_9$ \\
                            \hline
                             $x_3$ & $\lambda x_9$ & $-x_{10}$& 0 & $x_9$ \\
                            \hline
                            $x_4$ & $-x_{10}$ & $-\lambda x_9$ & $x_9$ & 0 \\
                            \hline
                            \end{tabular}
                            & $\lambda \neq 0$\\
 \hline
 \end{tabular}  }
\end{table}

\bigskip
{\bf Acknowledgment.} This research is funded by University of Economics and Law, Vietnam National University Ho Chi Minh City / VNU-HCM.
%, under grant number . 
A part of this paper was done during the visit of Hieu V. Ha and Vu A. Le to Vietnam Institute for Advanced Study in Mathematics (VIASM) in summer 2022. They are very grateful to VIASM for the support and hospitality.  
% --------------------- BIBLIOGRAPHY ---------------------------------
% --------------------------------------------------------------------

\end{document}